\pgfplotsset{compat=1.15}
\numberwithin{equation}{section}
\theoremstyle{plain}
\newtheorem{thm}{Theorem}[section]
\newtheorem{prop}[thm]{Proposition}
\newtheorem{cor}[thm]{Corollary}
\theoremstyle{definition}
\newtheorem{exa}[thm]{Example}
\newtheorem{conj}[thm]{Conjecture}
\newtheorem{rem}[thm]{Remark}
\newtheorem{defi}[thm]{Definition}
\title{The graph energy game}
\author{Gerardo Arizmendi and Octavio Arizmendi}
\date{\today}
\begin{document}

\maketitle

\begin{center}\it{Dedicated to our father, Manuel Enrique Arizmendi San Pedro, \\on the occasion of his 65th birthday}
\end{center}

\begin{abstract}
    We study the graph energy from a cooperative game viewpoint. We introduce \emph{the graph energy game} and show various properties. In particular, we see that it is a superadditive game and that the energy of a vertex, as defined in Arizmendi and Juarez-Romero (2018), belongs to the core of the game. These properties imply new bounds for the energy of graphs. We also consider a version based on $p$-Schatten norms.
\end{abstract}


\section{Introduction}

In this work, we propose to study Gutman's graph energy \cite{Gut} from the viewpoint of cooperative game theory. For this, we define a cooperative game based on the graph energy and study its main properties and solutions. 

Cooperative game theory is a branch of game theory which models scenarios where different players may gain a larger utility by cooperating between them. One main problem is the decide how to divide the total utility so that every player is satisfied and willing to cooperate. There are various solutions to this problem depending on the desired properties one accepts as fair. The book \cite{Peters} presents an introduction to the topic.

On the other hand, recall that for a graph $G$, the energy of $G$, is given by  $$\mathcal{E}(G)=\sum^n_{i=1}|\lambda_i|$$
where $\lambda_1,\dots,\lambda_n$ denote the eigenvalues of the adjacency matrix of $G$, counted with multiplicity. The monograph  \cite{LSG}, presents a nice review of the theory. 

The idea is that for a given graph $G$ with vertex set $V$, we define a game on $V$ by considering the energy of the subgraphs of $G$. More precisely, for each subset $S$ of $G$, let $H=H(S)$ be the \emph{induced} subgraph of $G$. The value of $S$ is given by the energy of the subgraph $H$, $\mathcal{E}(H)$. We prove that this is a \emph{superadditive} game.

We would also be interested in a particular payoff: the energy of a vertex. The energy of vertex as defined in \cite{AJ} is a way to split the total energy of the graphs among the different vertices. In principle, it is natural to try to give an interpretation in terms of centrality, similar to the centrality measures studied in networks, such as  degree, PageRank, betweeness, closeness, etc.  However, from observation of examples, we believe that one is lead to the intuition that the vertex energy is closer to a payoff in a cooperative game. Thus, on the above described game we consider the payment given by the vertex energy and prove that it actually satisfies the axioms of symmetry, null player and efficiency. More important we show that this payment is in the core and thus provide evidence of the above intuition (see Section 2.3 for definitions). This is the main result of the paper.

We also consider generalizations where we consider instead of the energy, a new family of ``energies" based on the $p$-Schatten norms. Similar results are proved for these generalizations, obtaining new inequalities.

We hope that this paper motivates the study of graphs and their properties from the viewpoint of game theory.

Apart from this introduction, the paper contains five sections.  In Section 2 we present the necessary preliminaries. 2.1 and 2.2 are devoted to graphs and energy of graph and vertices, 2.3 for matrix inequalities and in 2.4 we describe basic concepts of cooperative game theory.  In Section 3, we prove the main results of the paper. We introduce a cooperative game based on energy  of a graph in 3.1. In 3.2 we consider the vertex energy from this viewpoint and give further considerations. In Section 4 we generalize these ideas  for $p$-Schatten norms by introducing the $p$-Schatten energy. We start with basic properties of this energy in 4.1, then introduce the $p$-energy of a vertex in 4.2, and finally  we specialize in the case $p=2$ in Section 4.3, proving that this case coincide with games studied before. In Section 5 we present some  examples. We finish with a final small section regarding conclusions and further questions.

\section{Preliminaries}

\subsection{Graphs and its adjacency matrix}

We will consider simple undirected finite graphs which we will call simply graphs.

A graph $G$ consists of a pair $(V,E):=(V(G),E(G))$ where $|V|<\infty $ and $E\subset V\times V$ is such that, $(v,v)\notin E$ for all $v\in V$, and $(v,w)\in E$ implies that  $(w,v) \in E$, for all $v,w\in V$.

A subgraph $H$ of $G$ is a graph such that $V(H)\subset V(G)$ and $E(H)\subset E(G)$. Given $S\subset{V(G)}$, the induced subgraph of $S$ is the subgraph $I(S)=(S,F)$ with vertex set $S$ and edge set $F=\{(v,w)\in E(G)|v\in S,w \in S\}.$ We call a subgraph $H$ of $G$, induced if there is some $S$ such that $H=I(S)$.

For any graph $G=(V,E)$, with vertex set $V=\{v_1,\dots,v_n\}$, the adjacency matrix of $G$, that we denote by $A=A(G)$, is the matrix with entries 
$A_{ij}=1$ if $(v_i,v_j)\in E$ and $A_{ij}=0$ otherwise.

\subsection{ Graph energy and vertex energy}

We denote by $M_n:=M_n(\mathbb{C})$, the set of matrices of size $n\times n$ over the complex numbers and by $Tr:M_n\to \mathbb{C}$, the trace on $M_n$, which is the map $A \mapsto A_{11}+\cdots+ A_{nn}$.


The energy of a graph $G$, denoted by $\mathcal{E}(G)$, is defined as the sum of the absolute values of the eigenvalues of the adjacency matrix $A = A(G)$, i.e.,
	\begin{equation*}
	\mathcal{E}(G) = \sum_{i=1}^{n}|\lambda_{i}|.
	\end{equation*}
In other words, the energy of a graph is the trace norm of its adjacency matrix. More precisely, for a matrix $M$, we define
the absolute value of $M$ by $|M|:=(MM^*)^{1/2}$ where $M^*$ denotes conjugate transpose of $M$. The energy of $G$ is then given by
\begin{equation*}
\mathcal{E}(G)= Tr(|A(G)|)=\displaystyle\sum_{i=1}^{n}|A(G)|_{ii}.
\end{equation*}

The next important definition is given in  \cite{AJ} and studied in \cite{AFJ}.
\begin{defi}\label{Energyv}

For a graph G and a vertex $v_i\in G$, \emph{the energy of the vertex} $v_i$, which we denote by $\mathcal{E}_G(v_i)$, is given by
\begin{equation}
  \mathcal{E}_G(v_i)=|A(G)|_{ii}, \quad\quad~~~\text{for } i=1,\dots,n,
\end{equation}
where $A(G)$ is the adjacency matrix of $G$.
\end{defi}

In this way the energy of a graph is given by the sum of the individual energies of the vertices of $G$,
\begin{equation*}
  \mathcal{E}(G)=\mathcal{E}_G(v_1)+\cdots+\mathcal{E}_G(v_n),
\end{equation*}
and thus the energy of a vertex is a refinement of the energy of a graph.

The idea behind the definition comes from non-commutative probability. Actually, the linear functional $\phi_i:M_n\to\mathbb{C}$ defined by $\phi_i(A)=A_{ii}$ is a vector state, and in particular, it is positive and $\phi(I)=1$, where $I$ is the identity matrix, see \cite{HoraObata} for details.

 H\"older inequality extends to the non-commutative case as follows: Let $(p,q)$ be a pair such that $1/p+1/q=1$, then
\begin{equation}
\label{Hold}\phi_i(XY)\leq \phi_i(|X|^p)^{1/p}\phi_i(|Y|^q)^{1/q}, \text{ for all } X,Y\in M_n.
\end{equation}

\subsection{Jensen's trace inequality}

Trace inequalities refer to inequalities for the trace of matrices or operators and functions of them.  In this paper we will use a specific trace inequality which is the generalization of Jensen's inequality for convex functions. Before introducing it, we need to bring some basic notation and definitions.

\begin{defi}
\begin{enumerate}
\item A matrix is called self-adjoint if $M=M^*$.
\item An orthogonal projection $P\in M_n$ is a self-adjoint matrix such that $P=P^2.$

\end{enumerate}
\end{defi}

For a self-adjoint matrix $M$, and a  real function $f$ we may define $f(M)$ as follows: consider the diagonalization $M=UDU^{-1}$ where $U$ is unitary and $D$ is diagonal, then 
$$f(M) := U \begin{bmatrix}
f(d_1) & \dots & 0 \\
\vdots & \ddots & \vdots \\
0 & \dots & f(d_n)
\end{bmatrix} U^{-1} $$ where $d_1, \dots, d_n$ denote the diagonal entries of $D$.

The main tool to prove the desired properties for the energy will be Jensen's trace inequality, which we state as a proposition for further reference.

\begin{prop}[Jensen's trace inequality] \label{JTI}

Let $f$ be a continuous convex function and let  $X_1,\dots, X_i\in M_n$ be selfadjoint. If  $f$ is convex, the following the inequality holds:
$$\operatorname{Tr}\Bigl(f\Bigl(\sum_{k=1}^iA_k^*X_kA_k\Bigr)\Bigr)\leq \operatorname{Tr}\Bigl(\sum_{k=1}^i A_k^*f(X_k)A_k\Bigr),$$
where $A_1,\dots, A_i\in M_n$ is any collection of matrices such that $\sum_{k=1}^iA_k^*A_k=I.$
\end{prop}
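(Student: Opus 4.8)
The plan is to prove the inequality directly, reducing it --- along an eigenbasis of $Y:=\sum_{k=1}^{i}A_k^{*}X_kA_k$ --- to two applications of the ordinary scalar Jensen inequality. Note first that the hypothesis $\sum_{k}A_k^{*}A_k=I$ together with selfadjointness of the $X_k$ makes $Y$ selfadjoint, so we may fix an orthonormal basis $e_1,\dots,e_n$ of eigenvectors, $Ye_j=\mu_j e_j$. Since the trace is basis-independent, $\operatorname{Tr}(f(Y))=\sum_{j}f(\mu_j)=\sum_j f(\langle e_j,Ye_j\rangle)$, while $\operatorname{Tr}\bigl(\sum_k A_k^{*}f(X_k)A_k\bigr)=\sum_j\sum_k\langle A_ke_j,f(X_k)A_ke_j\rangle$. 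Hence it suffices to prove, for each fixed $j$,
$$f\bigl(\langle e_j,Ye_j\rangle\bigr)\ \le\ \sum_{k=1}^{i}\bigl\langle A_ke_j,\,f(X_k)\,A_ke_j\bigr\rangle .$$

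Fix $j$ and set $v_k:=A_ke_j$. Then $\sum_k\|v_k\|^{2}=\bigl\langle e_j,(\sum_kA_k^{*}A_k)e_j\bigr\rangle=1$, and $\langle e_j,Ye_j\rangle=\sum_k\langle v_k,X_kv_k\rangle$. Discarding the indices with $v_k=0$ and writing $\alpha_k:=\|v_k\|^{2}>0$, $u_k:=v_k/\|v_k\|$, this reads $\langle e_j,Ye_j\rangle=\sum_k\alpha_k\langle u_k,X_ku_k\rangle$ with $\alpha_k>0$ and $\sum_k\alpha_k=1$. Now apply scalar Jensen twice. Convexity of $f$ gives $f\bigl(\sum_k\alpha_k\langle u_k,X_ku_k\rangle\bigr)\le\sum_k\alpha_k\,f\bigl(\langle u_k,X_ku_k\rangle\bigr)$; and for any unit vector $u$ and selfadjoint $X$, diagonalizing $X=\sum_\ell\nu_\ell Q_\ell$ writes $\langle u,Xu\rangle=\sum_\ell\nu_\ell\|Q_\ell u\|^{2}$ as a convex combination of the eigenvalues $\nu_\ell$ (since $\sum_\ell\|Q_\ell u\|^{2}=1$), so $f(\langle u,Xu\rangle)\le\sum_\ell\|Q_\ell u\|^{2}f(\nu_\ell)=\langle u,f(X)u\rangle$. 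Combining the two estimates, $f(\langle e_j,Ye_j\rangle)\le\sum_k\alpha_k\langle u_k,f(X_k)u_k\rangle=\sum_k\langle v_k,f(X_k)v_k\rangle$, which is exactly the desired per-$j$ inequality; summing over $j$ completes the proof.

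I do not expect a genuine obstacle here; the only care needed is bookkeeping. One should check that $f$ is evaluated only on the convex hull of $\bigcup_k\operatorname{spec}(X_k)$ --- which contains all the scalars $\langle u_k,X_ku_k\rangle$ as well as all the $\mu_j$ --- so that convexity is invoked on a legitimate interval and the functional calculus $f(X_k)$ is well defined; and one should observe that the degenerate terms with $v_k=0$ genuinely drop out of every sum. A more structural route would stack the $A_k$ into a single isometry $\mathbf A$ with $\mathbf A^{*}\mathbf A=I$ and the $X_k$ into the block-diagonal selfadjoint matrix $X=\operatorname{diag}(X_1,\dots,X_i)$, reducing the statement to the one-term case $\operatorname{Tr}(f(\mathbf A^{*}X\mathbf A))\le\operatorname{Tr}(\mathbf A^{*}f(X)\mathbf A)$; but that case is itself proved by exactly the eigenbasis argument above, so I would present the direct version.
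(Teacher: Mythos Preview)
Your argument is correct: diagonalising $Y=\sum_k A_k^{*}X_kA_k$ in an eigenbasis and then applying scalar Jensen twice (once across the index $k$ with weights $\alpha_k=\|A_ke_j\|^2$, once along the spectral decomposition of each $X_k$) is the standard way to establish this inequality, and your bookkeeping with the degenerate terms $v_k=0$ and the domain of $f$ is sound.

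There is nothing to compare against in the paper itself: Proposition~\ref{JTI} is stated there without proof, merely quoted as a known tool ``for further reference''. So your write-up actually supplies a proof where the paper gives none. The block-isometry reformulation you mention at the end is indeed equivalent and is how the result is often packaged in the operator-theory literature, but the direct version you present is self-contained and perfectly adequate here.
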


\subsection{Cooperative Game Theory}

In this section, we consider a cooperative game with transferable utility, or a TU- game. That is  a pair $(N,w)$, where $N=\{1,\dots,n\}$ with $n\in\mathbb N$ and $w:2^N\rightarrow\mathbb R$ such that $w(\emptyset)=0$. $N$ is called the set of players and the function $w$ is called the characteristic function. Each subset of $N$ is called a coalition. For each coalition $S\subset N$, $w(S)$ is called the value of the $S$.
\\
{\bf Remark:}
Usually  the characteristic function is denoted by $v$, in this article we use $w$ so that the exposition is clear, since we work also with graphs in which $v_i$ denotes the $i$-th vertex of a graph.

\begin{defi}
A game $(N,w)$ is called {\it superadditive} if $w(S\cup T)\geq w(S)+w(T)$ whenever $S\cap T=\emptyset$.
\end{defi}

\begin{defi}
A game $(N,w)$ is called {\it convex} if $w(S\cup T)+w(S\cap T) \geq w(S)+w(T)$ for every $S,T\subset N$.
\end{defi}

A {\it payoff distribution} or {\it payoff vector }is an element $x=(x_1,\dots,x_n)$ in $\mathbb{R}^n$ and $x_i$ is called the payoff of the player $i$.

 For a payoff distribution $x$ and a coalition $S$, we use the notation $x(S)=\sum_{i\in S} x_i$.

\begin{defi} If $\mathbf{x}$ is a payoff vector we will say that

\begin{enumerate}
    \item $x$ is {\it individually rational} if $x_i\geq w({i})$ 
    \item  $x$ is {\it group rational} if $x(S)\geq w(S)$
    \item  $x$ is {\it efficient}  if $x(N)=w(N).$ 
    \item  $x$  is an {\it imputation} if it is individually rational and efficient. The set of imputations of $(N,w)$ is denoted by $I(w)$. 
\end{enumerate}

\end{defi}

\begin{defi}
 The \it{core} of a game $(N,w)$, denoted by $C(w)$ is the set of imputations that are group rational. That is, 
 $$C(w)=\{x| x(S)\geq w(S) \text{ for each } S\subset N \text{ and } x(N)=w(N) \}$$

\end{defi}

An important concept in cooperative game theory is the Shapley value. Given the set of all games of $n$ players, which we denote by $\mathcal G^{\mathcal N}$, a \emph{value} is a map from $\phi:\mathcal G^{\mathcal N}\mapsto \mathbb{R}^n$, i.e., a \emph{value} is a function that assigns a payoff distribution to each game of $n$ players.

Some desirable properties that values may satisfy are the following:

\begin{enumerate}
    \item {\bf Efficiency}: $\sum_{i=1}^n\phi_i(w)=w(N)$ for all $w\in \mathcal G^{\mathcal N}$,  where $\phi_i(w)$ denotes the the $i$-th entry of $\phi(w)$.
    \item {\bf Symmetry}: $\phi_i(w)=\phi_j(w)$ for all $w\in \mathcal{G}^{\mathcal N}$ and all symmetric players $i$ and $j$
in $w$, where $i$ and $j$ are symmetric in $(N,w)$ if $w(S\cup\{i\})=w(S\cup\{j\})$ for every coalition $S\subset N\backslash\{i,j\}$.
    \item {\bf Additivity}: For two games $w_1,w_2\in G^{\mathcal N}$,  $\phi(w_1+w_2)=\phi(w_1)+\phi(w_2).$ 
    \item {\bf Null player}: $\phi_u(w)=0$ for all $w\in \mathcal{G}^{\mathcal N}$ and all null players  $i\in w$,
    where a player is called null if $w(S\cup{i})-w(S)=0$ for every coalition $S$.
\end{enumerate}

The Shapley value is the only value that satisfies the axioms of efficiency, symmetry, additivity and null player, it is well known that actually the Shapley value is given by

\[\phi_i(w)=\sum_{S\subset N\backslash\{i\}}\frac{|S|!(n-|S|-1)!(w(S\cup\{i\})-w(S))}{n!}.\]

In general, it is not direct to decide whether the Shapley value is in the core. One quite used criteria to decide this is convexity: for any convex game, the Shapley value is always in the core. We refer the reader to \cite{Peters} for details.

\section{The energy game on graphs}\label{EGG}

In this section we define the energy game on graphs. We address natural questions about this game. Namely, we prove that for each graph, the energy game is a superadditive game, although it is not always convex. We also prove some important properties about the solutions, namely, we show that the core is non empty. Moreover, the energy of a vertex as defined in Definition \ref{Energyv}, is a payoff distribution, which we prove is in the core. These are the main results of the article. 

The energy of a vertex, thought as a value restricted the energy game on graphs, also satisfies the axioms of null player, symmetry and efficiency. However, it is easy to see that the Shapley value does not always coincide with the energy of a vertex.

\subsection{The energy game}

 We now define the object of our interest, namely, the energy game on graphs. 

\begin{defi}
Let $G=(V,E)$ be a simple undirected graph. For each subset $S\subset V$ let $w(S):=\mathcal{E}(I(S))$ where $I(S)$ is the graph induced by $S$. We denote $(V,w)$, the \emph{energy game on $G$.}  
\end{defi}

The following theorem justifies the interest in the definition above.
\begin{prop} \label{Super} For any graph $G=(V,E)$, the game $(V,w)$ is superadditive.
\end{prop}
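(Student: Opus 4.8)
The plan is to reduce superadditivity to a statement about adjacency matrices and then apply Jensen's trace inequality (Proposition \ref{JTI}) with the convex function $f(x)=|x|$. Let $S,T\subset V$ with $S\cap T=\emptyset$, and set $R=S\cup T$. Order the vertices of $G$ so that those in $S$ come first, those in $T$ next, and the rest last. With respect to this ordering, the adjacency matrix $A(I(R))$ of the induced subgraph on $R$ has a block structure whose diagonal blocks are exactly $A(I(S))$ and $A(I(T))$; the key point is that $I(S)$ and $I(T)$ are the subgraphs induced on $S$ and $T$, so no edges are lost when passing to the diagonal blocks. Writing $B=A(I(R))$, and letting $P$ and $Q=I-P$ be the orthogonal projections (within $M_{|R|}$) onto the coordinates indexed by $S$ and by $T$ respectively, we have $PBP = A(I(S)) \oplus 0$ and $QBQ = 0 \oplus A(I(T))$, so that
\begin{equation*}
\mathcal{E}(I(S)) + \mathcal{E}(I(T)) = \operatorname{Tr}\bigl(|PBP|\bigr) + \operatorname{Tr}\bigl(|QBQ|\bigr) = \operatorname{Tr}\bigl(P|B|'P + Q|B|'Q\bigr)
\end{equation*}
needs care; more precisely I will work with $\operatorname{Tr}(|PBP|) = \operatorname{Tr}(P|PBP|P)$ and similarly for $Q$, and compare with $\operatorname{Tr}(|B|) = \operatorname{Tr}(P|B|P) + \operatorname{Tr}(Q|B|Q)$.

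The core estimate is: for the self-adjoint matrix $B$ and the projection $P$ with complement $Q$, one has $|PBP| \le$ (in trace) the compression $P|B|P$, and likewise $|QBQ|$ versus $Q|B|Q$. This is precisely where Jensen's trace inequality enters. Take $f(x)=|x|$, which is continuous and convex. Apply Proposition \ref{JTI} with a single term ($i=1$) is not enough; instead I apply it with the two "coordinate-projection" matrices. Concretely, for the compression to $S$: note $PBP$ restricted to the $S$-block is $A(I(S))$, and $\operatorname{Tr}(f(PBP)) = \operatorname{Tr}(f(A(I(S)))) + f(0)\cdot|T\cup(V\setminus R)|$ evaluated appropriately. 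To compare $\operatorname{Tr}(f(PBP+QBQ))$ with $\operatorname{Tr}(Pf(B)P) + \operatorname{Tr}(Qf(B)Q)$, set $A_1 = P$, $A_2 = Q$ in Jensen's trace inequality: since $P^*P + Q^*Q = P + Q = I$ and $PBP + QBQ$ is the relevant convex combination argument, we get
\begin{equation*}
\operatorname{Tr}\bigl(f(PBP + QBQ)\bigr) \le \operatorname{Tr}\bigl(Pf(B)P + Qf(B)Q\bigr) = \operatorname{Tr}(f(B)) = \mathcal{E}(I(R)).
\end{equation*}
On the other hand, $PBP + QBQ$ is block-diagonal with blocks $A(I(S))$, $A(I(T))$ and a zero block, so $\operatorname{Tr}(f(PBP+QBQ)) = \mathcal{E}(I(S)) + \mathcal{E}(I(T))$. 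Combining the two displays gives $\mathcal{E}(I(S)) + \mathcal{E}(I(T)) \le \mathcal{E}(I(R)) = w(S\cup T)$, which is superadditivity. I also note $w(\emptyset)=\mathcal{E}(\text{empty graph})=0$, as required of a characteristic function.

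The main obstacle is purely bookkeeping: verifying that $A_k^*X_kA_k$ with $X_1=X_2=B$ and $A_1=P$, $A_2=Q$ really produces $PBP+QBQ$ and that this matrix, in the chosen vertex ordering, is block-diagonal with the two adjacency matrices $A(I(S))$ and $A(I(T))$ as its nontrivial blocks — this uses crucially that $S$ and $T$ are disjoint and that we take \emph{induced} subgraphs, so there is no double counting or loss of edges. One should also confirm $f(x)=|x|$ is an admissible choice (continuous, convex on $\mathbb{R}$) and that $f(0)=0$ so the extra zero block contributes nothing. No deep idea beyond this is needed; the content is entirely in the correct application of Proposition \ref{JTI}.
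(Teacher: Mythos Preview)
Your argument is correct and is essentially the paper's own proof: both apply Jensen's trace inequality (Proposition~\ref{JTI}) with $f(x)=|x|$, $X_1=X_2=B$, and $A_1=P$, $A_2=I-P$ to obtain $\operatorname{Tr}(|PBP+QBQ|)\le\operatorname{Tr}(P|B|P+Q|B|Q)=\operatorname{Tr}(|B|)$, then identify the left side as $\mathcal{E}(I(S))+\mathcal{E}(I(T))$ and the right side as $\mathcal{E}(I(S\cup T))$. The only cosmetic differences are that you work inside $M_{|R|}$ with $B=A(I(S\cup T))$ (so there is no extra zero block, contrary to one of your remarks), whereas the paper phrases it as a block inequality for a generic adjacency matrix; the content is identical.
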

\begin{proof}
The proposition is equivalent to proving that if the adjacency matrix of a graph $G$ is partitioned as 
$$A(G)=\left(\begin{array}{cc}
X&Y\\
Z&W
\end{array}\right),$$
with $X$ and $W$ square matrices of dimension $k$ and $n-k$, respectively, then  $Tr(|X|)+Tr(|W|)\leq Tr(|A(G)|)$. This may be proved by the use of Lemma 4.17 of \cite{LSG} (see also Theorem 3 in \cite{Tho}). We will prove it using Jensen's trace inequality, Proposition \ref{JTI}, applied to the function $f=Abs$. Indeed, taking   $n=2$, $X_1=A(G)$, $X_2=A(G)$,  $A_1=P$ and $A_2=I-P$, where $P$ is the projection to the first $k$ entries, we get that
\begin{equation}\label{jen2} \operatorname{Tr}\Bigl(|PA(G)P +(I-P)A(G)(I-P)| \Bigr) \leq \operatorname{Tr}\Bigl(P| A(G)|P+(1-P)| A(G)|(1-P)\Bigr).\end{equation}
Since we have that $$PA(G)P=\left(\begin{array}{cc}
X&0\\
0&0
\end{array}\right), (I-P)A(G)(I-P)=\left(\begin{array}{cc}
0&0\\
0&W
\end{array}\right),$$ 
the left hand side of \eqref{jen2} equals $Tr(|X|)+Tr(|W|).$ On the other hand for any matrix $M$, $M$ and $PMP+(I-P)M(I-P)$ have the same diagonal entries, thus $Tr(PMP+(I-P)M(I-P))=Tr(M)$, and thus the right handside of \eqref{jen2} is $Tr(|A(G)|)$.
\end{proof}

\begin{rem}
At first glance, one may think that superadditivity may be related to the triangle inequality of the  1-Schatten norm, see section 4. Triangle inequality, also known as Ky-Fan's theorem,
says that for any matrices $A+B$
$$Tr(|A+B|)\leq Tr(|A|)+Tr(|B|).$$
This may seem in contradiction with the inequality $E(S\cup T)\geq E(S)+E(T)$, but looking closer at the proof of Proposition \ref{Super} one sees that triangle inequality yields $E(S\cup T)\leq E(S)+E(T) +E(U)$
where $U$ is the bipartite subgraph of $G$ which contains the edges in $G$ between the induced subgraphs by the sets $S$ and $T$.  
\end{rem}

Let us mention that the energy game is not always convex as the following example shows.

\begin{exa}
Consider $G=(V,E)$, with $V=\{1,2,3\}$ and $E=\{\{1,2\},\{2,3\}\}$. Let $S=\{1,2\}$ and $T=\{2,3\}$. Then $w(S)=w(T)=2$, $w(S\cup T)=2\sqrt{2}$ and  $w(S\cap T)=0$, hence 
$w(S)+w(T)>w(S\cup T)+w(S\cap T)$. This proves that this game is not convex.
\end{exa}

\subsection{The vertex energy as a payoff}

In \cite{AJ}, the energy of a vertex is introduced and considered. For a graph $G$, with adjacency matrix $A(G)$, and a vertex $v_i$, recall that the energy of the vertex $v_i$ is given by 
$$\mathcal{E}_G(v_i)=|A(G)|_{ii}.$$

The vector $e=(\mathcal{E}_G(v_1),\dots, \mathcal{E}_G(v_n))$ is an imputation since the sum of  the energy of the vertices is exactly the total energy of the graph. 

Now we prove the main result of the paper, which states that the vertex energy is a payoff which is in the core. 

This will be a direct consequence of the following theorem.

\begin{thm}\label{MTeorem} Let $H$ be an induced subgraph of $G$, then 
$$\sum_{v\in V(H)} \mathcal E_G(v)\geq\mathcal E(H),$$
 where $\mathcal E_G(v)$ is the vertex energy of $v$ in $G$ and $\mathcal E(H)$ is the energy of $H$. 
 \end{thm}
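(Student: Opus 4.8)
The plan is to run the same Jensen's trace inequality argument used for Proposition~\ref{Super}, but now splitting the vertex set of $G$ into $V(H)$ and its complement. Write $n=|V(G)|$ and let $P\in M_n$ be the orthogonal projection onto the coordinates indexed by $V(H)$, so that $I-P$ is the projection onto the complement. Since $H$ is the \emph{induced} subgraph on $V(H)$, the compression $PA(G)P$ is (up to the trivial zero block on the complementary coordinates) exactly the adjacency matrix $A(H)$; in particular $\operatorname{Tr}(|PA(G)P|)=\mathcal E(H)$.

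The key step is to apply Proposition~\ref{JTI} with $f=\mathrm{Abs}$, $i=2$, $X_1=X_2=A(G)$, $A_1=P$, $A_2=I-P$ (note $P^*P+(I-P)^*(I-P)=P+(I-P)=I$, so the hypothesis is met). This gives
\begin{equation*}
\operatorname{Tr}\bigl(|PA(G)P+(I-P)A(G)(I-P)|\bigr)\leq \operatorname{Tr}\bigl(P|A(G)|P+(I-P)|A(G)|(I-P)\bigr).
\end{equation*}
For the right-hand side, as observed in the proof of Proposition~\ref{Super}, conjugating by $P$ and $I-P$ and adding preserves the diagonal, so $\operatorname{Tr}(P|A(G)|P+(I-P)|A(G)|(I-P))=\operatorname{Tr}(|A(G)|)=\mathcal E(G)$. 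That is the wrong bound, though — I want $\mathcal E(H)$ on one side and $\sum_{v\in V(H)}\mathcal E_G(v)$ on the other, not $\mathcal E(G)$.

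So the correct move is not to trace the whole thing but to compress first: apply $\operatorname{Tr}(P\,\cdot\,P)$ (equivalently, sum only the diagonal entries indexed by $V(H)$) to Jensen's inequality, or more cleanly, use the monotonicity of the functional $\phi(M):=\sum_{v\in V(H)}M_{vv}$ together with the operator inequality behind Jensen. Concretely: $PA(G)P+(I-P)A(G)(I-P)$ is block-diagonal with blocks $A(H)$ and $W$, so $|PA(G)P+(I-P)A(G)(I-P)|$ is block-diagonal with blocks $|A(H)|$ and $|W|$; hence the sum of its $V(H)$-diagonal entries is exactly $\operatorname{Tr}(|A(H)|)=\mathcal E(H)$. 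On the other side, the $V(H)$-diagonal entries of $P|A(G)|P+(I-P)|A(G)|(I-P)$ coincide with those of $|A(G)|$, which sum to $\sum_{v\in V(H)}|A(G)|_{vv}=\sum_{v\in V(H)}\mathcal E_G(v)$. Since Jensen's trace inequality here comes from a diagonal-entrywise (majorization) statement — or can be localized by noting the two sides agree off the $V(H)$ block after the zero padding — restricting the trace to the $V(H)$ coordinates preserves the inequality, giving $\mathcal E(H)\le \sum_{v\in V(H)}\mathcal E_G(v)$, as desired.

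The main obstacle is this last point: Proposition~\ref{JTI} as stated is only a statement about the \emph{full} trace, so I need to justify restricting to the $V(H)$-diagonal. The cleanest fix is to instead apply Proposition~\ref{JTI} on the smaller matrix algebra $M_{|V(H)|}$: compress everything by $P$ at the outset, i.e. work inside $PM_nP\cong M_{|V(H)|}$, with $X_1=X_2=PA(G)P$ restricted to that block and an appropriate partition of identity there — but that loses the contribution of $W$. Alternatively, and most robustly, I will invoke the entrywise/pinching form: for $f$ convex and $\sum A_k^*A_k=I$, one has the diagonal-wise inequality whose $i$-th diagonal entry reads $f\!\bigl((\sum A_k^*X_kA_k)_{ii}\bigr)\le (\sum A_k^*f(X_k)A_k)_{ii}$ is false in general, so I should instead derive the restricted inequality from Lemma~4.17 of \cite{LSG} applied to the principal submatrix indexed by $V(H)$, exactly as in the alternative proof of Proposition~\ref{Super}. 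I expect the write-up to take the Jensen route but with the honest remark that what is really used is the block-diagonal structure plus the fact that pinching to $\{$block $V(H)$, block complement$\}$ preserves the diagonal, so summing any fixed subset of diagonal entries of both sides of \eqref{jen2} is legitimate.
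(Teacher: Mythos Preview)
You have correctly diagnosed the difficulty: with the choice $X_1=X_2=A(G)$ (copied from Proposition~\ref{Super}) Jensen's trace inequality only yields the global bound $\mathcal E(H)+\mathcal E(W)\le \mathcal E(G)$, and Proposition~\ref{JTI} is a statement about the \emph{full} trace, so you cannot simply restrict the sum to the $V(H)$-diagonal entries. Your attempted patches do not close this gap. In particular, the claim that ``the two sides agree off the $V(H)$ block after the zero padding'' is false: on the complement block the left-hand side of \eqref{jen2} carries the diagonal of $|W|$ while the right-hand side carries the diagonal of $|A(G)|$, and these need not coincide. The majorization\slash entrywise variant you gesture at is, as you yourself note, not valid in general, and invoking Lemma~4.17 of \cite{LSG} only recovers the superadditivity statement, not the localized one you need.

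The fix in the paper is a one-line change in the \emph{input} to Proposition~\ref{JTI} rather than in how its output is read: keep $A_1=P$, $A_2=Q=I-P$, but take $X_1=A(G)$ and $X_2=0$. Then $\sum_k A_k^*X_kA_k=PA(G)P$, while on the right $\sum_k A_k^*|X_k|A_k=P|A(G)|P+Q\,|0|\,Q=P|A(G)|P$. Jensen's trace inequality now reads
\[
\operatorname{Tr}\bigl(|PA(G)P|\bigr)\ \le\ \operatorname{Tr}\bigl(P|A(G)|P\bigr),
\]
and both sides are already what you want: the left-hand side is $\mathcal E(H)$ because $PA(G)P$ is $A(H)$ padded by zeros, and the right-hand side is $\sum_{v\in V(H)}|A(G)|_{vv}=\sum_{v\in V(H)}\mathcal E_G(v)$ because $P|A(G)|P$ has those diagonal entries on $V(H)$ and zeros elsewhere. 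No localization of the trace is needed.
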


\begin{proof}
Without loss of generality, suppose the the vertex set of $H$ is $\{1,\dots,k\}$ for some $k$.
Let $A$ be the adjacency matrix of $G$, let $P$ be the projection into the first $k$ entries, and $Q=I-P$.  Now, Jensen's trace inequality, Proposition \ref{JTI}, applied to the function $f=Abs$, the absolute value, and the parameters   $n=2$, $X_1=A$, $X_2=0$,  $A_1=P$ and $A_2=Q$, gives 
\begin{equation}\label{mainequation}
Tr(|PAP|)\leq Tr(P|A|P).    
\end{equation}
Since $PAP$ is the adjacency matrix of the induced subgraph $H$ the left-hand side of \ref{mainequation} coincides with the energy of $H$, $E(H)$.  On the other hand, by definition the diagonal of the matrix $|A|$ consists of the values $\mathcal E(v_i)$, $i=1,\dots,|V|$ and thus the diagonal of  $P|A|P$ contains the values $(P|A|P)_{ii}=\mathcal E(v_i)$, for $i=1,\dots,k$ and  $(P|A|P)_{ii}=0$ for $i=k+1,\dots,n$, which means that $Tr(P|A|P)=\sum_{v\in V(H)} \mathcal E_G(v)$

\end{proof}
We are now in position to prove the main theorem.
 
\begin{thm}\label{Core}
Let $(G,V)$ be a simple undirected graph, then the payoff vector given by
$e=(\mathcal{E}(v_1),\dots, \mathcal{E}(v_n))$ is in the core of the game
$(V,w_{\mathcal{E}})$.
\end{thm}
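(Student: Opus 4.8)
The plan is to deduce the result directly from Theorem \ref{MTeorem} together with the vertex decomposition of the graph energy, by checking the three conditions that define membership in the core: efficiency, individual rationality, and group rationality.

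First I would verify efficiency. Since the diagonal entries of a matrix sum to its trace, $e(V)=\sum_{i=1}^n \mathcal{E}_G(v_i)=\sum_{i=1}^n |A(G)|_{ii}=\operatorname{Tr}(|A(G)|)=\mathcal{E}(G)=w_{\mathcal{E}}(V)$, which is precisely the efficiency condition $e(V)=w_{\mathcal{E}}(V)$. Next, group rationality: for an arbitrary coalition $S\subset V$, apply Theorem \ref{MTeorem} to the induced subgraph $H=I(S)$. This gives $e(S)=\sum_{v\in V(H)}\mathcal{E}_G(v)\geq \mathcal{E}(H)=\mathcal{E}(I(S))=w_{\mathcal{E}}(S)$, as required. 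Finally, individual rationality is just the case $S=\{i\}$ of the previous step: $e_i=\mathcal{E}_G(v_i)\geq w_{\mathcal{E}}(\{i\})=\mathcal{E}(I(\{i\}))=0$, the last equality because a single vertex carries no edge, so its adjacency matrix is zero; alternatively one notes that $|A(G)|$ is positive semidefinite and hence has nonnegative diagonal. Combining the three points, $e$ is an imputation which is group rational, i.e. $e\in C(w_{\mathcal{E}})$.

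There is essentially no obstacle at this stage: all the analytic work has already been carried out in Theorem \ref{MTeorem} via Jensen's trace inequality. The only points deserving a line of comment are that the game is well defined since $w_{\mathcal{E}}(\emptyset)=\mathcal{E}(\text{empty graph})=0$, and that Theorem \ref{MTeorem} may be invoked for an arbitrary coalition $S$ because it was stated for a general induced subgraph (the ``without loss of generality'' relabeling of the vertices of $S$ as $\{1,\dots,k\}$ being harmless, as the energy and vertex energies are invariant under simultaneous permutation of rows and columns of the adjacency matrix). Hence the theorem follows as an immediate corollary.
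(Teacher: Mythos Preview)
Your proof is correct and follows essentially the same approach as the paper: both reduce the core membership to the inequality of Theorem \ref{MTeorem}, with efficiency coming from the trace decomposition of the energy. Your write-up is simply more explicit in separating out the three conditions and in commenting on the relabeling and the empty coalition, but there is no substantive difference in the argument.
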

\begin{proof} We need to prove that for each $S\subset V$,  $w_{\mathcal E}(S)\geq e(S)$ that is 
$$\mathcal E(I(S))\leq \sum_{i\in S} \mathcal E(v_i)$$ where $\mathcal E(v_i)$ is the vertex energy of $v_i$ in $G$ and $\mathcal E(G(S))$ is the energy of of the graph induced by $S$. The result now follows from Theorem \ref{MTeorem}.
\end{proof}

{\bf Remark:} Theorem \ref{MTeorem} gives an inequality  useful for bounding the energy of a graph. For example, for two connected vertices $v$ and $w$ the theorem implies that $\mathcal E(v)+\mathcal E(w)\geq 2$, this has been proved independently in \cite{Arizmendi-Arizmendi}. Another example is given by Theorem 4.20 in \cite{LSG} for which we give an alternative proof.

\begin{thm}
If $F$ is an edge cut of a simple graph, then $\mathcal{E}(G-F)\leq \mathcal{E}(G)$.
\end{thm}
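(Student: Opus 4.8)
The plan is to reduce the statement to the superadditivity of the energy game, Proposition~\ref{Super} (or, equivalently, to a double application of Theorem~\ref{MTeorem}). First I would unwind the definition of an edge cut: if $F$ is an edge cut of $G=(V,E)$, then there is a partition $V=S\sqcup T$ into two nonempty parts such that $F$ is exactly the set of edges of $G$ joining a vertex of $S$ to a vertex of $T$. Consequently every edge of $G-F$ lies entirely inside $S$ or entirely inside $T$, which is to say that $G-F$ is the disjoint union of the induced subgraphs $I(S)$ and $I(T)$.

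From this observation the computation is routine: ordering the vertices of $S$ first, the adjacency matrix of $G-F$ is block diagonal with blocks $A(I(S))$ and $A(I(T))$, so its spectrum (and hence its trace norm) is the union of those of the two blocks, giving $\mathcal{E}(G-F)=\mathcal{E}(I(S))+\mathcal{E}(I(T))=w(S)+w(T)$, where $w$ is the characteristic function of the energy game on $G$. Then I would invoke Proposition~\ref{Super}: since $S\cap T=\emptyset$, superadditivity yields $w(S)+w(T)\le w(S\cup T)=w(V)=\mathcal{E}(G)$, which is the desired inequality. Alternatively, one can avoid quoting superadditivity and apply Theorem~\ref{MTeorem} directly to the induced subgraphs $I(S)$ and $I(T)$, obtaining $\mathcal{E}(I(S))\le\sum_{v\in S}\mathcal{E}_G(v)$ and $\mathcal{E}(I(T))\le\sum_{v\in T}\mathcal{E}_G(v)$, and then adding these two inequalities and using $\sum_{v\in V}\mathcal{E}_G(v)=\mathcal{E}(G)$.

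The only genuinely delicate point is the first step, namely using the correct notion of \emph{edge cut}. What makes the inequality go through is precisely that $F$ equals $\partial(S)$, the full set of edges between $S$ and its complement, so that $G-F$ restricted to each side is the \emph{entire} induced subgraph and no edge inside a part is lost; if one only assumed that $F$ is some set of edges whose deletion disconnects $G$, the conclusion would be false, since graph energy is not monotone under deletion of edges within a connected component. So I would state the definition of edge cut explicitly at the outset and flag that this is exactly where it is used; everything after that is the block-diagonal identity together with Proposition~\ref{Super}.
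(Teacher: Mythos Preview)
Your proposal is correct and essentially coincides with the paper's own proof. The paper takes precisely your ``alternative'' route---applying Theorem~\ref{MTeorem} to each of the two induced pieces and summing---rather than quoting superadditivity, and your explicit remark that an edge cut must be the \emph{full} set $\partial(S)$ (so that each side remains an induced subgraph) is a useful clarification the paper leaves implicit.
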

\begin{proof}
Since  $F$ is an edge cut of $G$, $G-F=H\cup K$, where $H$ and $K$ are two complementary induced graphs. We can split the set vertices of $G$ as $V_H\cup V_K$ where the $V_H$ is the set of vertices of $H$ and $V_K$ is the set of vertices in $H$.
By Theorem \ref{MTeorem}, $$\mathcal{E}(H)\leq \sum_{v\in V_H} \mathcal E(v), \text{ and} $$
$$\mathcal{E}(K)\leq \sum_{v\in V_K}, \mathcal E(v).$$
Hence,
$$\mathcal{E}(G-F)=\mathcal{E}(H)+\mathcal{E}(K)\leq \sum_{v\in V_H} \mathcal E(v)+ \sum_{v\in V_K} \mathcal E(v)=\sum_{v\in V(G)} \mathcal E(v)=\mathcal{E}(G)$$

\end{proof}

If one considers the energy game on graphs with $n$ vertices, then the energy of a vertex can be thought as a value in these games. In this sense, one can ask if this value coincides with the Shapley value restricted to the energy game on graphs. The next theorem goes in this direction
\begin{thm}\label{syefnull}
The energy of a vertex satisfies the axioms of symmetry, efficiency and null player axiom.
\end{thm}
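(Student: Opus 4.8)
The plan is to verify the three axioms for the value $\phi_i(w) = \mathcal{E}_G(v_i) = |A(G)|_{ii}$ one at a time, exploiting the fact that this value is defined intrinsically from the adjacency matrix rather than combinatorially from the characteristic function.

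Efficiency is immediate and was already noted: since $|A(G)|$ is self-adjoint with eigenvalues $|\lambda_1|,\dots,|\lambda_n|$, we have $\sum_{i=1}^n |A(G)|_{ii} = \operatorname{Tr}(|A(G)|) = \sum_i |\lambda_i| = \mathcal{E}(G) = w(V)$, so the payoff vector sums to the value of the grand coalition.

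For the null player axiom I would argue as follows. Suppose $v_i$ is a null player in the energy game, meaning $w(S\cup\{i\}) - w(S) = 0$ for every $S \subset V$. Taking $S = \emptyset$ gives nothing; the useful choices are small coalitions. In particular, for every other vertex $v_j$, taking $S = \{j\}$ shows $\mathcal{E}(I(\{i,j\})) = \mathcal{E}(I(\{j\})) = 0$, so $v_i$ and $v_j$ are non-adjacent (an edge between them would give the path $K_2$ with energy $2$). Hence a null player is an isolated vertex of $G$. For an isolated vertex the $i$-th row and column of $A(G)$ vanish, so $A(G) = A' \oplus (0)$ as a block sum, whence $|A(G)| = |A'| \oplus (0)$ and $|A(G)|_{ii} = 0 = \mathcal{E}_G(v_i)$, as required. (Conversely one should note an isolated vertex is indeed null, since adding it to any $S$ adds an isolated vertex to $I(S)$ and does not change the energy; this direction is not needed for the axiom but clarifies the picture.)

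For symmetry, suppose $v_i$ and $v_j$ are symmetric players: $w(S\cup\{i\}) = w(S\cup\{j\})$ for every $S \subset V\setminus\{i,j\}$. The natural route is to produce a permutation matrix conjugation of $A(G)$ that swaps rows/columns $i$ and $j$ while fixing the matrix, which would force $|A(G)|_{ii} = |A(G)|_{jj}$ by the equivariance of the absolute value under conjugation by permutation (indeed any unitary) matrices. Concretely, taking $S$ to consist of the closed/open neighborhoods appropriately, one extracts from the symmetry hypothesis that $v_i$ and $v_j$ have the same adjacency to every third vertex $v_k$ ($k \neq i,j$) --- e.g. taking $S = \{k\}$ gives $\mathcal{E}(I(\{i,k\})) = \mathcal{E}(I(\{j,k\}))$, i.e. $v_i \sim v_k \iff v_j \sim v_k$ --- so that the transposition $\tau = (i\,j)$ is an automorphism of $G$ up to the single question of whether $v_i \sim v_j$, which is a $\tau$-invariant condition anyway. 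Thus the permutation matrix $P_\tau$ satisfies $P_\tau A(G) P_\tau^{-1} = A(G)$, hence $P_\tau |A(G)| P_\tau^{-1} = |A(G)|$, and comparing the $(i,i)$ entry of both sides gives $\mathcal{E}_G(v_i) = \mathcal{E}_G(v_j)$.

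The main obstacle is the bookkeeping in the symmetry argument: one must be careful that the game-theoretic symmetry condition (equality of $w(S\cup\{i\})$ and $w(S\cup\{j\})$ as \emph{numbers}, for all $S$) genuinely forces the combinatorial condition that $\tau=(i\,j)$ is a graph automorphism, rather than merely some spectral coincidence of the induced subgraphs. The cleanest way is probably to run the above neighborhood-comparison only over singletons $S=\{k\}$, which already pins down all adjacencies, and then observe that whether $v_i\sim v_j$ does not affect $\tau$-invariance of $A(G)$; once $\tau$ is an automorphism the rest is the elementary unitary-invariance observation about $|{\cdot}|$. If a fully self-contained treatment is wanted, one could instead invoke that symmetric players are in particular interchangeable in all coalitions containing both, and deduce structural identity directly, but the singleton argument is the shortest.
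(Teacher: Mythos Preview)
Your proof is correct and follows the same overall outline as the paper's: efficiency is immediate from the trace formula; null players are identified with isolated vertices, which have vertex energy zero; and symmetric players are shown, via singleton coalitions $S=\{k\}$, to have identical adjacencies to every third vertex, after which a transposition automorphism forces $\mathcal{E}_G(v_i)=\mathcal{E}_G(v_j)$. The differences lie in the supporting arguments. For the null-player step the paper argues the contrapositive, invoking the bound $\mathcal{E}_G(v_i)\ge \deg(v_i)/\Delta$ from \cite{AFJ} together with Theorem~\ref{MTeorem} to show that a non-isolated vertex satisfies $w(V)>w(V\setminus\{v_i\})$; your direct use of $S=\{j\}$ to force $\mathcal{E}(I(\{i,j\}))=0$ is more elementary and self-contained. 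For symmetry, the paper phrases the conclusion as $N(v_i)=N(v_j)$ and leaves the final step $\mathcal{E}_G(v_i)=\mathcal{E}_G(v_j)$ implicit; your explicit permutation-conjugation argument $P_\tau|A|P_\tau^{-1}=|A|$ makes this step visible, and your remark that the possible edge $v_i\sim v_j$ (which the singleton test cannot detect, since $S\subset V\setminus\{i,j\}$) is $\tau$-invariant in any case is a genuine clarification that the paper glosses over.
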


\begin{proof}
The efficiency axiom follows by definition.

We will prove that the only null players are isolated vertices. Indeed, suppose that $v_i$ is not isolated, then $deg(i)>0$ and by \cite[Theorem 3.3]{AFJ}    
$$\mathcal{E}(v_i)\geq deg(v_i)/\Delta>0,$$
where $\Delta$ denotes the maximum degree of the graph. 
On the other hand, by  Theorem \ref{MTeorem}, for $H=G\setminus\{i\}$ we have 
$$\sum_{v\in V\setminus\{v_i\}} \mathcal E_G(v)\geq\mathcal E(G\setminus\{v_i\})=w(V\setminus\{v_i\}),$$
and then $$w(V)=\mathcal E_G(v_i)+\sum_{v\in V\setminus\{v_i\}}\mathcal{E}(v_i) >w(V\setminus\{v_i\}),$$ proving that $i$ is not a null player.
Now if $v_i$ is isolated $\mathcal{E}(v_i)=0$, as desired.

Now, let us characterize symmetric players. For a vertex $v$ in $V$ consider $N(v)$, the set of neighbours of $v$.
We claim that $v_i$ and $v_j$ are symmetric if and only if $N(v_i)=N(v_j)$. First if $N(v_i)=N(v_j)$, then for any $S\subset G$ the sub-graph by $S\cup{v_i}$ and $S\setminus{v_i}$ are isomorphic to  $S\cup{v_j}$ and $S\setminus{v_j}$, respectively. Thus $v_i$ and $v_j$ are symmetric.

Now, let $v_i$ and $v_j$ be symmetric players and consider for each $w\in N(v_i)$ the subgraph induced by $\{w,v_i\}$, which has energy $2$. Since $v_j$ and $v_i$ are symmetric then the subgraph induced by $\{w,v_j\}$ must also have energy $2$. This, of course, means that $w$ is connected to $v_j$ in $G$. Since this works for all $w\in N(v_j)$, then $N(v_j)\subset N(v_i)$. Interchanging the roles of $v_i$ and $v_j$, we see that $N(v_i)\subset N(v_j)$ and thus $N(v_i)=N(v_j)$.
\end{proof}

Even though the energy of a vertex satisfies the above axioms of symmetry, efficiency and null player, it does not coincide with the Shapley value, as the following example shows. 

\begin{exa}
Consider $G=(V,E)$, with $V=\{1,2,3\}$ and $E=\{\{1,2\},\{2,3\}\}$, i.e. $G$ is the path with $3$ vertices. A direct calculation shows that $\mathcal{E}(v_1)=\mathcal{E}(v_3)=\frac{1}{\sqrt{2}}$ and
$\mathcal{E}(v_2)=\sqrt{2}$. On the other hand, the values of each coalition are given by
$$\mathcal{E}(\{\})=\mathcal{E}(\{1\})=\mathcal{E}(\{2\})=\mathcal{E}(\{3\})=\mathcal{E}(\{1,3\})=0,$$

$$\mathcal{E}(\{1,2\})=\mathcal{E}(\{2,3\})=2,$$

$$\mathcal{E}(\{1,2,3\})=2\sqrt{2}.$$

Hence, if we denote by
$\varphi(v_i)$ the Shapley value of the $i$-th vertex then a direct calculation shows that
$\varphi(v_1)=\varphi(v_3)=\frac{2\sqrt{2}-1}{3}$ and
$\varphi(v_2)=\frac{2+2\sqrt{2}}{3}$.

\end{exa}

However, we conjecture  that the Shapley value is in the core. 
\begin{conj}
For the energy game, the Shapley value is in the core.
\end{conj}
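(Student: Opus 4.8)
The final statement is the conjecture that for the energy game $(V, w_{\mathcal{E}})$, the Shapley value lies in the core. Since Theorem \ref{Core} already shows the vertex-energy vector $e$ is in the core, and the core is a convex polytope, one natural angle is to try to exhibit the Shapley value as a convex combination of points known to be in the core.

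**Approach via averaging over marginal vectors.** Recall that the Shapley value equals the average, over all $n!$ orderings $\pi$ of the players, of the corresponding marginal contribution vector $m^\pi$, where $m^\pi_i = w(P^\pi_i \cup \{i\}) - w(P^\pi_i)$ and $P^\pi_i$ is the set of players preceding $i$ in $\pi$. Since the core is convex, it suffices to show that each marginal vector $m^\pi$ lies in the core — this is precisely the statement that the energy game is \emph{convex} (exactness of all marginal vectors is equivalent to convexity). But the paper already exhibits an example showing the energy game is \emph{not} convex, so this clean route is blocked: individual marginal vectors need not be group rational. The plan is therefore to work with the average directly rather than termwise.

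**What I would actually try.** First, efficiency is immediate ($\varphi(V) = w(V) = \mathcal{E}(G)$), so the content is group rationality: for every $S \subset V$, $\sum_{i\in S}\varphi_i(w_{\mathcal E}) \ge \mathcal E(I(S))$. I would fix $S$ and split each ordering $\pi$ according to the induced ordering on $S$ and the positions of $V\setminus S$. Writing $\sum_{i\in S} m^\pi_i$ as a telescoping-type sum, one can compare it to the marginal contributions \emph{within} the subgame on $S$: the key inequality to establish is that inserting the players of $S$ one at a time, but into a growing coalition that also contains some players outside $S$, contributes at least as much as inserting them into a coalition containing only players of $S$. Concretely, I would hope to prove the ``monotone marginal'' estimate $w(T\cup\{i\}) - w(T) \ge w(T'\cup\{i\}) - w(T')$ whenever $T \supseteq T'$ and the extra vertices $T\setminus T'$ lie outside $S$ — \emph{restricted to the situation where $i$'s new neighbors all lie in $T'$}, i.e. $N(i)\cap T = N(i)\cap T'$. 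Such a conditional supermodularity, combined with Theorem \ref{MTeorem} applied to the subgame on $S$ to handle the ``purely inside $S$'' part, would give $\sum_{i\in S}\varphi_i \ge \varphi^{S}(S) = w(S)$ after averaging. The technical heart is an operator inequality of the form: if $A$ is the adjacency matrix of $G$ and $B, B'$ are adjacency matrices of induced subgraphs with $B' \preceq B$ as principal submatrices sharing the same block structure around a new row/column, then $\operatorname{Tr}|B| - \operatorname{Tr}(\text{sub})\ge \operatorname{Tr}|B'| - \operatorname{Tr}(\text{sub})$; this should again follow from Jensen's trace inequality (Proposition \ref{JTI}) with $f = \mathrm{Abs}$, using projections that add one coordinate at a time.

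**Main obstacle.** The hard part will be precisely that the naive per-ordering argument fails (the game is not convex), so the needed inequality cannot hold unconditionally — it must exploit some cancellation that only appears after summing over all orderings, or it must be a genuinely weaker ``conditional'' monotonicity as sketched above whose hypotheses are met with high enough frequency in the averaging. Verifying that the conditional hypothesis $N(i)\cap T = N(i)\cap T'$ can be arranged (or that the error terms from orderings violating it are nonnegative in aggregate) is where I expect the real work to lie; it may require a more careful combinatorial bookkeeping of orderings grouped by the relative order of $i$ and its neighbors, together with a finer trace inequality than Proposition \ref{JTI} alone provides — possibly the full strength of Lemma 4.17 of \cite{LSG} or a perturbation bound for $\operatorname{Tr}|A|$ under rank-one/row-column additions. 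I would not be surprised if the conjecture needs an entirely different idea, such as relating the energy game to a known class of games (e.g. spectrally defined games) whose Shapley values are understood.
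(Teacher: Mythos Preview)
The statement you are attempting to prove is labelled \emph{Conjecture} in the paper, not a theorem: the authors do not prove it and offer no argument beyond the adjacent proposition bounding marginal contributions below by vertex energy. So there is no ``paper's own proof'' to compare your attempt to; you are proposing an attack on an open problem.

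On the substance of your plan: the key lemma you hope for --- the conditional supermodularity
\[
w(T\cup\{i\}) - w(T)\ \ge\ w(T'\cup\{i\}) - w(T')\quad\text{whenever }T\supseteq T'\text{ and }N(i)\cap T = N(i)\cap T'
\]
--- is \emph{false} already on $P_3$, the very graph used in the paper to witness non-convexity. Take $V=\{1,2,3\}$ with edges $1\sim2$ and $2\sim3$, set $S=\{1,2\}$, $i=1$, $T'=\{2\}$, $T=\{2,3\}$. Then $N(1)=\{2\}$, so $N(1)\cap T = \{2\} = N(1)\cap T'$ and $T\setminus T'=\{3\}\subset V\setminus S$; yet
\[
w(T\cup\{1\})-w(T)=\mathcal E(P_3)-\mathcal E(\{2,3\})=2\sqrt{2}-2\approx 0.828,
\]
while $w(T'\cup\{1\})-w(T')=\mathcal E(\{1,2\})-0=2$. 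The presence of an extra vertex outside $S$ that is a neighbour of a neighbour of $i$ (here $3\sim2$) can strictly \emph{decrease} the marginal gain from adding $i$, even though $i$ acquires no new neighbour. So the ``monotone marginal'' estimate cannot be the workhorse; any argument along these lines would need the cancellation to occur genuinely in the average over orderings, as you yourself flag in your final paragraph. At present your proposal is a reasonable outline of where the difficulty sits, but it does not contain a proof, and the specific inequality you single out as the technical heart is the wrong target.
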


Let us end with the following lemma which says that the marginal contribution of a vertex in a coalition is larger than the sum of the  vertex energies of the coalition, which is a step in that direction.  
\begin{prop}
Let $S\subset V$ and $i\in S$. The marginal contribution of $i$, satisfies $$ w(S)-w(S\setminus \{i\})\geq \mathcal{E}_S(i).$$ 
\end{prop}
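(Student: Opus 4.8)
The plan is to imitate the proof of Theorem \ref{MTeorem}, but now comparing the induced subgraph on $S$ with the induced subgraph on $S \setminus \{i\}$, rather than with the whole graph $G$. The key observation is that $\mathcal{E}_S(i)$ is, by definition, the $i$-th diagonal entry of $|A(I(S))|$, so the statement $w(S) - w(S\setminus\{i\}) \geq \mathcal{E}_S(i)$ is a statement entirely about the matrix $B := A(I(S))$, the adjacency matrix of the induced subgraph on $S$. Concretely, writing $m = |S|$ and relabelling so that $i$ is the last index, we must show
$$\operatorname{Tr}(|B|) - \operatorname{Tr}(|B'|) \geq |B|_{mm},$$
where $B'$ is the $(m-1)\times(m-1)$ principal submatrix of $B$ obtained by deleting the last row and column, i.e. $B' = A(I(S\setminus\{i\}))$.

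First I would apply Jensen's trace inequality, Proposition \ref{JTI}, exactly as in Theorem \ref{MTeorem}: take the function $f = Abs$, the matrices $X_1 = B$, $X_2 = 0$, and $A_1 = P$, $A_2 = Q = I - P$, where now $P$ is the projection onto the first $m-1$ coordinates (so $P$ kills the coordinate of $i$). This yields
$$\operatorname{Tr}(|PBP|) \leq \operatorname{Tr}(P|B|P).$$
The left-hand side equals $\operatorname{Tr}(|B'|) = w(S\setminus\{i\})$, since $PBP$ is the adjacency matrix of $I(S\setminus\{i\})$ padded with a zero row and column. The right-hand side equals $\sum_{j=1}^{m-1} |B|_{jj} = \operatorname{Tr}(|B|) - |B|_{mm} = w(S) - \mathcal{E}_S(i)$, since the diagonal entries of $|B|$ are precisely the vertex energies $\mathcal{E}_S(v_j)$ of the vertices of $I(S)$. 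Rearranging gives $w(S\setminus\{i\}) \leq w(S) - \mathcal{E}_S(i)$, which is exactly the claim.

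I do not expect any real obstacle here: the argument is essentially Theorem \ref{MTeorem} applied with the ambient graph $G$ replaced by the induced subgraph $I(S)$ and the subset taken to be $S \setminus \{i\}$, so one could even phrase the proof as a one-line corollary of Theorem \ref{MTeorem}. The only point requiring a word of care is the bookkeeping that the diagonal of $|A(I(S))|$ consists of the quantities $\mathcal{E}_S(v_j)$ (the vertex energies computed inside $I(S)$, not inside $G$), and that $PBP + QBQ$ has the same diagonal — hence the same trace — as $B$; both facts are already used in the proofs above. If a self-contained presentation is wanted, I would just restate the Jensen step explicitly as displayed above.
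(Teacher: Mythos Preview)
Your proposal is correct and is essentially the same argument as the paper's: the paper simply invokes Theorem~\ref{MTeorem} with ambient graph $I(S)$ and induced subgraph $I(S\setminus\{i\})$ to get $\sum_{j\in S\setminus\{i\}}\mathcal{E}_S(j)\geq \mathcal{E}(S\setminus\{i\})$, then adds $\mathcal{E}_S(i)$ to both sides. This is exactly the ``one-line corollary'' formulation you anticipated; your displayed Jensen step just unwinds that citation.
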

\begin{proof}
This follows from the following inequalities
\begin{eqnarray} 
\mathcal{E}(S)&=&\sum_{j\in S}\mathcal{E}_S(j)=\sum_{j\in S\setminus\{i\} }\mathcal{E}_S(j)+\mathcal{E}_S(i)
\\&\geq& 
\mathcal{E}(S\setminus \{i\})+\mathcal{E}_S(i),
\end{eqnarray} 
where we used Theorem 3.5 in the inequality. 
\end{proof}

\section{Schatten energy game}

Recalling that the energy of a graph is the 1-Schatten norm (or nuclear norm) of the adjacency matrix, in this section we want to consider an energy inspired by the $p-$Schatten norms. 

Explicitly, instead of using $|A|$ one can use $|A|^p$. In this way, the $p$-energy of a graph is defined as $\mathcal E_p(G):= tr(|A(G)|^p)$ where $A(G)$ denotes the adjacency matrix of $G$. By the fact that $A(G)^2_{ij}$ counts the walks of size two from the vertex $v_i$ to the vertex $v_j$, one easily gets that $\mathcal E_2(G)= 2m$, where $m$ is the number of edges in the graph. 

Here, we should point out that using the $p$-Schatten norm, $(tr(|A(G)|^p))^{1/p}$,  directly would result in a theory which is not superadditive.

The $p$-energy game on a graph $(G,v)$ is defined in an analogous way. For each subset $S\subset V$ let $w_p(S):=\mathcal{E}_p(G(S))$ where $G(S)$ is the graph induced by $S$. Moreover for $p\geq1$ we have that

\begin{prop}
Let $p\geq 1$. The $p$-energy game on graphs is superadditive.
\end{prop}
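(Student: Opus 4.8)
The plan is to mirror the proof of Proposition \ref{Super} almost verbatim, replacing the function $f = \mathrm{Abs}$ by the convex function $f(t) = |t|^p$, which is indeed convex on $\mathbb{R}$ precisely when $p \geq 1$. As before, superadditivity reduces to the following matrix statement: if the adjacency matrix of $G$ is partitioned in block form
$$A(G) = \left(\begin{array}{cc} X & Y \\ Z & W \end{array}\right),$$
with $X$ and $W$ square of sizes $k$ and $n-k$, then $\operatorname{Tr}(|X|^p) + \operatorname{Tr}(|W|^p) \leq \operatorname{Tr}(|A(G)|^p)$, since the induced subgraphs on a disjoint pair $S, T$ of vertex subsets have adjacency matrices sitting as the diagonal blocks of the induced subgraph on $S \cup T$.

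Concretely, I would apply Jensen's trace inequality (Proposition \ref{JTI}) with $i = 2$, $X_1 = X_2 = A(G)$, $A_1 = P$ and $A_2 = I - P$, where $P$ is the orthogonal projection onto the first $k$ coordinates. Since $P A(G) P$ and $(I-P)A(G)(I-P)$ have zero off-diagonal blocks, $P A(G) P + (I-P) A(G) (I-P)$ is exactly the block-diagonal matrix $\operatorname{diag}(X, W)$, so the left-hand side of the inequality is $\operatorname{Tr}(|X|^p) + \operatorname{Tr}(|W|^p) = \mathcal{E}_p(I(S)) + \mathcal{E}_p(I(T))$. For the right-hand side, $f(X_k) = |A(G)|^p$ for $k = 1, 2$, and since conjugating by $P$ or $I-P$ and summing preserves the diagonal of any matrix, $\operatorname{Tr}(P|A(G)|^p P + (I-P)|A(G)|^p(I-P)) = \operatorname{Tr}(|A(G)|^p) = \mathcal{E}_p(G)$. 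This yields $\mathcal{E}_p(I(S)) + \mathcal{E}_p(I(T)) \leq \mathcal{E}_p(I(S \cup T))$ whenever $S \cap T = \emptyset$, which is superadditivity.

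The only genuine point to check — and the place where the hypothesis $p \geq 1$ is used — is the convexity of $t \mapsto |t|^p$ on $\mathbb{R}$; for $0 < p < 1$ this fails (the function is concave on each half-line), which is consistent with the remark preceding the statement. Everything else is the same bookkeeping as in Proposition \ref{Super}, so I do not expect a real obstacle; one should just be slightly careful that Proposition \ref{JTI} is stated for self-adjoint $X_k$, which holds here since $A(G)$ is symmetric.
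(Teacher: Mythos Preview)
Your proposal is correct and is essentially identical to the paper's own proof: the paper simply says to repeat the argument of Proposition \ref{Super} via Jensen's trace inequality with $X_1=X_2=A(G)$, $A_1=P$, $A_2=I-P$, replacing $f=\mathrm{Abs}$ by the convex function $f(\cdot)=|\cdot|^p$. Your added remarks on where $p\geq 1$ is used and on self-adjointness are accurate and only make the argument more explicit.
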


\begin{proof}
The proof is the same as the one of Proposition \ref{Super} by using Jensen's trace inequality for $n=2$, $X_1=A(G)$, $X_2=A(G)$,  $A_1=P$ and $A_2=I-P$, where $P$ is the projection to the first $k$ entries. In this case, one should consider the convex function $f(\cdot)=|\cdot|^p$. 
\end{proof}

\subsection{Some properties of the Schatten energy}

Apart from the above properties described in relation to game theory, by being the $p$-th power of the Schatten norm we automatically have the following properties.

\begin{prop}[Monotonicity of Schatten norms] \label{mon schatten}
For $1<p<q<\infty$, we have that 
\begin{equation} \label{monotonocity}
 \mathcal{E}_p^{1/p}\geq \mathcal{E}_q^{1/q}. \end{equation}
\end{prop}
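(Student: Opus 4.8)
The plan is to recognize \eqref{monotonocity} as nothing more than the classical monotonicity of $\ell^p$-norms in $p$, applied to the vector of singular values of the adjacency matrix. Concretely, if $\lambda_1,\dots,\lambda_n$ are the eigenvalues of $A=A(G)$ and we set $s_i:=|\lambda_i|\ge 0$ (these are the singular values of $A$, since $A$ is real symmetric), then by the spectral definition of $|A|^p$ recalled in Section~2 we have $\mathcal{E}_p(G)=\operatorname{tr}(|A|^p)=\sum_{i=1}^n s_i^p$, so that $\mathcal{E}_p^{1/p}=\bigl(\sum_i s_i^p\bigr)^{1/p}=\|s\|_{\ell^p}$. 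Thus it suffices to show that $\|s\|_{\ell^q}\le\|s\|_{\ell^p}$ for every vector $s$ with non-negative entries whenever $1<p<q<\infty$.

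To prove that, first dispose of the trivial case $s=0$. Otherwise, observe that for each index $i$ we have $s_i^p\le\sum_j s_j^p=\|s\|_{\ell^p}^p$, hence $s_i\le\|s\|_{\ell^p}$, and therefore $s_i^{q-p}\le\|s\|_{\ell^p}^{q-p}$ because $q-p>0$. Multiplying this inequality by $s_i^p$ and summing over $i$ gives
\[
\|s\|_{\ell^q}^q=\sum_i s_i^q=\sum_i s_i^p\,s_i^{q-p}\le\|s\|_{\ell^p}^{q-p}\sum_i s_i^p=\|s\|_{\ell^p}^{q-p}\,\|s\|_{\ell^p}^{p}=\|s\|_{\ell^p}^{q}.
\]
Taking $q$-th roots yields $\|s\|_{\ell^q}\le\|s\|_{\ell^p}$, and substituting $s_i=|\lambda_i|$ recovers exactly \eqref{monotonocity}.

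There is essentially no obstacle here: the only point requiring a word of care is the identification of $\mathcal{E}_p^{1/p}$ with the $\ell^p$-norm of the vector $(|\lambda_1|,\dots,|\lambda_n|)$, which is immediate from the definition of $|A(G)|^p$ via functional calculus together with symmetry of the adjacency matrix. One could alternatively derive the same inequality from the concavity of $t\mapsto t^{p/q}$ via Jensen's inequality, or from the power-mean inequality, but the elementary estimate above—bounding the surplus exponent $s_i^{q-p}$ by $\|s\|_{\ell^p}^{q-p}$—is the shortest self-contained route and is the one I would include.
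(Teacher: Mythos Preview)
Your argument is correct: identifying $\mathcal{E}_p^{1/p}$ with the $\ell^p$-norm of the singular-value vector and then invoking the elementary inequality $\|s\|_{\ell^q}\le\|s\|_{\ell^p}$ for $p<q$ is exactly the right reduction, and your one-line proof of the latter (bounding $s_i^{q-p}\le\|s\|_{\ell^p}^{q-p}$ and summing) is clean and complete.

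The paper itself does not give a proof of this proposition; it simply records it as a standard fact about Schatten norms, writing that ``by being the $p$-th power of the Schatten norm we automatically have the following properties.'' So you are supplying the details that the paper omits. Your route via the $\ell^p$-norm of the eigenvalue vector is precisely the content of that omitted standard argument, and the extra paragraph spelling out the elementary estimate is a welcome addition for self-containedness.
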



A direct corollary is the following, which will be improved below for bipartite graphs.

\begin{cor}
 Let $G=(V,E)$ be a graph with $|V|=n$ and $|E|=m$ then 
 
 i) If $1\leq p\leq 2$, then $(2m)^{p/2}\leq \mathcal{E}_p(G)$.
 
 ii)  If $p>2$, then $(2m)^{p/2} \geq\mathcal{E}_p(G)$.
\end{cor}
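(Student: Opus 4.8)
The plan is to derive the corollary directly from the monotonicity of Schatten norms (Proposition \ref{mon schatten}) by comparing the exponent $p$ with the fixed reference value $2$, and then using the already-established identity $\mathcal{E}_2(G) = 2m$. The point is that $\mathcal{E}_2(G)^{1/2} = (2m)^{1/2}$ is a concrete anchor, so all we need is to know on which side of $2$ the exponent $p$ lies.

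First I would treat case (i), $1 \leq p \leq 2$. If $p < 2$, apply \eqref{monotonocity} with the pair $(p, 2)$ in place of $(p,q)$, obtaining $\mathcal{E}_p^{1/p} \geq \mathcal{E}_2^{1/2} = (2m)^{1/2}$. Raising both sides to the power $p$ (which preserves the inequality since both sides are nonnegative and $t \mapsto t^p$ is increasing on $[0,\infty)$) yields $\mathcal{E}_p(G) \geq (2m)^{p/2}$. The boundary cases $p = 1$ and $p = 2$ are handled separately: $p = 2$ is the equality $\mathcal{E}_2(G) = 2m = (2m)^{2/2}$, and $p = 1$ follows either by a limiting argument or, more cleanly, from the general monotonicity statement extended to $p=1$ (or just by noting $\mathcal E_1 = \mathcal E \ge \sqrt{2m}$, a standard fact).

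Next, case (ii), $p > 2$: apply \eqref{monotonocity} now with the pair $(2, p)$, so that $2$ plays the role of the smaller exponent, giving $\mathcal{E}_2^{1/2} \geq \mathcal{E}_p^{1/p}$, i.e. $(2m)^{1/2} \geq \mathcal{E}_p(G)^{1/p}$. Again raising both sides to the $p$-th power reverses nothing and gives $(2m)^{p/2} \geq \mathcal{E}_p(G)$, as claimed.

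I do not anticipate a serious obstacle here; the statement is essentially a repackaging of Proposition \ref{mon schatten} together with the evaluation $\mathcal{E}_2(G)=2m$. The only mild subtlety is the strictness of the hypotheses in Proposition \ref{mon schatten} as stated (it is written for $1 < p < q < \infty$), so one must be slightly careful at the endpoints $p=1$ and $p=2$; I would simply remark that the $p=2$ case is an equality and that the $p=1$ case is either a known inequality ($\mathcal{E}(G) \geq \sqrt{2m}$, e.g. McClelland's bound) or follows by continuity of $\mathcal{E}_p(G)^{1/p}$ in $p$. Everything else is monotonicity of $t \mapsto t^p$ on the nonnegative reals.
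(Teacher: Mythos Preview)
Your proposal is correct and follows essentially the same route as the paper: both apply the monotonicity of Schatten norms \eqref{monotonocity} with the reference value $q=2$ (or $p=2$, depending on direction) and use $\mathcal{E}_2(G)=2m$ to conclude. Your treatment of the endpoints $p=1$ and $p=2$ is in fact more careful than the paper's, which applies \eqref{monotonocity} without commenting on the strict inequalities $1<p<q<\infty$ in its hypothesis.
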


\begin{proof}
Since $\mathcal E_2(G)=2m$, by \eqref{monotonocity} we have,
$$\mathcal{E}_p(G)^{1/p} \geq\mathcal{E}_2(G)^{1/2}=(2m)^{1/2}$$
for $0\leq p<2$, and
 $$\mathcal{E}_p(G)^{1/p} \leq\mathcal{E}_2(G)^{1/2}=(2m)^{1/2}$$
 for $p<2$.
\end{proof}

On the other hand, multiplying by a factors of $n^{-1/p}$ and $n^{-1/q}$  reverses the  inequality (\ref{monotonocity}).

\begin{prop} \label{monotonocity2}
For $1<p<q<\infty$, we have that 
\begin{equation}\label{Hol} (\mathcal{E}_p/n)^{1/p}\leq (\mathcal{E}_q/n)^{1/q}. \end{equation}
\end{prop}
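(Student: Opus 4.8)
The plan is to deduce \eqref{Hol} from the already-established monotonicity of the raw Schatten norms, Proposition \ref{mon schatten}, by combining it with the power-mean (or H\"older) inequality applied to the $n$ eigenvalue absolute values. Concretely, write $a_i := |\lambda_i|$ for the absolute values of the eigenvalues of $A(G)$, so that $\mathcal{E}_p = \sum_{i=1}^n a_i^p$. Then \eqref{Hol} is exactly the statement that the map $p \mapsto \bigl(\tfrac1n \sum_{i=1}^n a_i^p\bigr)^{1/p}$, the $p$-th power mean of the nonnegative numbers $a_1,\dots,a_n$, is nondecreasing in $p$. This is a classical fact, so the proof is essentially an invocation of it; the only thing to check is that the normalization by $n$ is what converts the "wrong-direction" plain Schatten monotonicity into this "right-direction" inequality.

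The cleanest self-contained route is via H\"older's inequality. First I would fix $1 < p < q < \infty$ and set $r := q/p > 1$ with conjugate exponent $r'$ satisfying $1/r + 1/r' = 1$. Applying H\"older to the vectors $(a_i^p)_{i=1}^n$ and $(1)_{i=1}^n$ gives
\[
\sum_{i=1}^n a_i^p \;=\; \sum_{i=1}^n a_i^p \cdot 1 \;\leq\; \Bigl(\sum_{i=1}^n (a_i^p)^{r}\Bigr)^{1/r}\Bigl(\sum_{i=1}^n 1^{r'}\Bigr)^{1/r'} \;=\; \Bigl(\sum_{i=1}^n a_i^{q}\Bigr)^{p/q}\, n^{1 - p/q}.
\]
That is, $\mathcal{E}_p \leq \mathcal{E}_q^{p/q}\, n^{1-p/q}$. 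Dividing both sides by $n$ yields $\mathcal{E}_p/n \leq \mathcal{E}_q^{p/q}\, n^{-p/q} = (\mathcal{E}_q/n)^{p/q}$, and raising to the power $1/p$ gives precisely \eqref{Hol}. Alternatively, one can quote Proposition \ref{mon schatten} as a black box: it already tells us the plain norms are ordered the other way, and the factor $n^{1/p - 1/q} > 1$ (since $p < q$) is exactly large enough to flip the inequality — but the H\"older derivation above is more transparent and avoids worrying about the direction of that correction.

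There is essentially no serious obstacle here; this proposition is the "dual" bookkeeping companion to Proposition \ref{mon schatten} and both follow from the same elementary inequality. The only point requiring a sliver of care is the edge behavior: if some $a_i = 0$ the displayed computation is still valid (H\"older does not require positivity, only nonnegativity), and if $A(G) = 0$ then $\mathcal{E}_p = 0$ for all $p$ and \eqref{Hol} holds trivially as $0 \le 0$. I would write the proof as the two-line H\"older computation above, remark that it is the classical monotonicity of power means, and note for context that it can equivalently be obtained from Proposition \ref{mon schatten} by inserting the normalizing factors $n^{-1/p}, n^{-1/q}$, as the surrounding text already suggests.
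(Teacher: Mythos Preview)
Your H\"older argument is correct and is the standard proof of the power-mean inequality; this is exactly what the paper is invoking. Note that the paper does not give a formal proof of this proposition at all: it simply states it as a known fact, with the informal remark that ``multiplying by factors of $n^{-1/p}$ and $n^{-1/q}$ reverses the inequality \eqref{monotonocity}.'' Your write-up therefore supplies the missing details rather than paralleling an existing argument.

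One small caution: your alternative suggestion of deducing \eqref{Hol} from Proposition~\ref{mon schatten} ``as a black box'' by inserting the normalizing factors does not actually work on its own. From $\mathcal{E}_p^{1/p}\geq \mathcal{E}_q^{1/q}$ you cannot conclude $(\mathcal{E}_p/n)^{1/p}\leq (\mathcal{E}_q/n)^{1/q}$ merely by observing that $n^{1/p-1/q}>1$; multiplying the two sides of an inequality by different positive constants does not, in general, reverse it. The paper's phrasing is loose on this point too. Your H\"older computation is the real proof, so I would drop or reword that alternative remark.
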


In particular taking again $q=2$ we obtain generalization of McClelland inequality \cite{Mc}, and taking $p=2$ we obtain a lower bound in terms of the number of edges $m$

\begin{prop}
 Let $G=(V,E)$ be a graph with $|V|=n$ and $|E|=m$ then 
 
 i) If $1\leq p\leq 2$, then $ \mathcal{E}_p(G)\leq n^{1-p/2} (2m)^{p/2}$.
 
 ii)  If $p>2$, then  $ \mathcal{E}_p(G)\geq n^{1-p/2} (2m)^{p/2}$.
\end{prop}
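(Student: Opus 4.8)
The plan is to deduce both inequalities directly from Proposition \ref{monotonocity2}, exactly in the way the paragraph preceding the statement suggests, by specializing the free exponent to $2$. The only mild subtlety is that Proposition \ref{monotonocity2} is stated for $1<p<q<\infty$, so the endpoint cases $p=1$ and $p=2$ must be addressed separately; both are essentially trivial.

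For part (i), assume first $1<p<2$ and apply Proposition \ref{monotonocity2} with the pair $(p,2)$, using that $\mathcal{E}_2(G)=2m$. This gives $(\mathcal{E}_p(G)/n)^{1/p}\le(\mathcal{E}_2(G)/n)^{1/2}=(2m/n)^{1/2}$. Raising both sides to the $p$-th power (legitimate since both sides are nonnegative and $t\mapsto t^p$ is increasing on $[0,\infty)$) yields $\mathcal{E}_p(G)/n\le(2m/n)^{p/2}$, i.e. $\mathcal{E}_p(G)\le n\,(2m)^{p/2}n^{-p/2}=n^{1-p/2}(2m)^{p/2}$. The case $p=2$ is equality, and the case $p=1$ follows either by letting $p\downarrow 1$ in the inequality just proved (the quantities involved are continuous in $p$, since $\mathcal{E}_p(G)=\sum_i|\lambda_i|^p$) or directly from the Cauchy--Schwarz inequality $\sum_i|\lambda_i|\le n^{1/2}(\sum_i|\lambda_i|^2)^{1/2}=n^{1/2}(2m)^{1/2}=n^{1-1/2}(2m)^{1/2}$.

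For part (ii), assume $p>2$ and apply Proposition \ref{monotonocity2} with the pair $(2,p)$, again using $\mathcal{E}_2(G)=2m$. This gives $(2m/n)^{1/2}=(\mathcal{E}_2(G)/n)^{1/2}\le(\mathcal{E}_p(G)/n)^{1/p}$. Raising both sides to the $p$-th power gives $(2m/n)^{p/2}\le\mathcal{E}_p(G)/n$, i.e. $\mathcal{E}_p(G)\ge n\,(2m/n)^{p/2}=n^{1-p/2}(2m)^{p/2}$, as claimed.

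There is no real obstacle here: once Proposition \ref{monotonocity2} is granted, the argument is a one-line specialization plus a monotone power map, and the endpoint $p=1$ in (i) is handled by a standard Cauchy--Schwarz estimate. If one wanted, the whole proof could be compressed into the single display chain $\mathcal{E}_p(G)=n\,(\mathcal{E}_p(G)/n)\le n\,(\mathcal{E}_2(G)/n)^{p/2}=n^{1-p/2}(2m)^{p/2}$ for $p\le 2$ and the reversed chain for $p\ge 2$.
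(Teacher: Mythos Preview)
Your proof is correct and follows exactly the approach of the paper, which simply notes $\mathcal{E}_2(G)=2m$ and invokes inequality (\ref{Hol}). Your version is in fact slightly more careful than the paper's one-line proof, since you explicitly handle the endpoints $p=1$ and $p=2$ that lie outside the strict range $1<p<q<\infty$ in which Proposition \ref{monotonocity2} is stated.
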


\begin{proof}
Note that $\mathcal{E}_2(G)=2m$. Both inequalities follow then from (\ref{Hol}).
\end{proof}

It is well known that among trees with $n$-vertices, $S_n$ and $P_n$ are the graphs with  minimal and maximal energy, respectively. 

For the Schatten energy there is a different behavior between $1\leq p\leq2$ and $p>2$. This is stated in Li, Shi and Gutman \cite{LSG} as a private communication from S. Wagner and is asked again in Nikiforov\cite{Nik3}.

\begin{conj} [\cite{Nik3}]
 Let $T=(V,E)$ be a tree with $|V|=n$ and $|E|=m$ then 
 
 i) If $1\leq p\leq 2$, then $\mathcal{E}_p (S_n)\leq \mathcal{E}_p(T) \leq\mathcal{E}_p(P_n)$.
 
 ii)  If $p>2$, then $\mathcal{E}_p (P_n) \leq \mathcal{E}_p(T) \leq\mathcal{E}_p(S_n)$.
\end{conj}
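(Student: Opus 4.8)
The plan is to exploit the classical matching-polynomial description of the spectrum of a forest and reduce the conjecture to a monotonicity statement. For a tree $T$ on $n$ vertices the characteristic polynomial coincides with the matching polynomial, $\phi(T,x)=\sum_{k\ge 0}(-1)^{k}m(T,k)x^{n-2k}$, where $m(T,k)$ is the number of $k$-matchings, $m(T,0)=1$, $m(T,1)=n-1$; if $\mu_1,\dots,\mu_\nu$ are the nonzero squared eigenvalues (with the multiplicities of the corresponding $\pm$-pairs) then $e_k(\mu_1,\dots,\mu_\nu)=m(T,k)$, so
$$\prod_{i=1}^{n}\bigl(1+t^{2}\lambda_i^{2}\bigr)=\Bigl(\textstyle\sum_{k\ge 0}m(T,k)\,t^{2k}\Bigr)^{2}.$$
Recall the Gutman quasi-order: $T_1\preceq T_2$ iff $m(T_1,k)\le m(T_2,k)$ for all $k$, for which $S_n$ is the unique minimum and $P_n$ the unique maximum among trees on $n$ vertices. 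It thus suffices to prove that $T\mapsto\mathcal{E}_p(T)$ is monotone along $\preceq$: non-decreasing for $1\le p<2$ and non-increasing for $p>2$ (the case $p=2$ being the identity $\mathcal{E}_2\equiv 2(n-1)$).

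For this I would use a Coulson-type integral. From the elementary identity $|x|^{p}=d_p\int_{0}^{\infty}\ln(1+x^{2}t^{2})\,t^{-1-p}\,dt$ with $d_p=\bigl(\int_0^\infty\ln(1+u^2)u^{-1-p}\,du\bigr)^{-1}>0$, valid for $0<p<2$, summing over the eigenvalues of $A(T)$ and using the displayed product identity gives
$$\mathcal{E}_p(T)=2d_p\int_{0}^{\infty}t^{-1-p}\,\ln\Bigl(\textstyle\sum_{k\ge 0}m(T,k)\,t^{2k}\Bigr)\,dt .$$
Since $T_1\preceq T_2$ implies $\sum_k m(T_1,k)t^{2k}\le\sum_k m(T_2,k)t^{2k}$ for all $t\ge 0$ and $2d_p t^{-1-p}\ge 0$, this establishes part (i). For $p>2$ the defining integral diverges at $0$, so one subtracts the degree-$\ell$ Taylor polynomial of $\ln(1+s)$ where $2\ell<p<2\ell+2$; since $R_\ell(s):=\ln(1+s)-\sum_{j=1}^{\ell}\frac{(-1)^{j+1}}{j}s^{j}$ satisfies $R_\ell'(s)=(-s)^\ell/(1+s)$ and $R_\ell(0)=0$, it has constant sign $(-1)^\ell$ on $(0,\infty)$, and one obtains
$$\mathcal{E}_p(T)=c_p^{(\ell)}\int_{0}^{\infty}t^{-1-p}\Bigl(2\ln\bigl(\textstyle\sum_{k\ge0}m(T,k)t^{2k}\bigr)-\textstyle\sum_{j=1}^{\ell}\tfrac{(-1)^{j+1}}{j}N_{2j}(T)\,t^{2j}\Bigr)dt ,$$
with $N_{2j}(T)=\operatorname{Tr}(A(T)^{2j})$ and $c_p^{(\ell)}$ of sign $(-1)^\ell$. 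When $\ell=1$ (that is $2<p<4$) the only correction is $N_2(T)=2(n-1)$, constant over trees, so the bracket is non-decreasing along $\preceq$ while $c_p^{(1)}<0$, making $\mathcal{E}_p$ non-increasing along $\preceq$ and proving part (ii) for $2<p<4$; the endpoint $p=4$ is handled directly from $\mathcal{E}_4(T)=N_4(T)=2((n-1)^2-2m(T,2))$.

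The main obstacle is $p>4$, where $\ell\ge 2$ and the correction now involves $N_{2j}(T)$ for $j\ge 2$: by Newton's identities these are mixed-sign polynomials in $m(T,1),\dots,m(T,j)$ — for instance $N_4(T)=2((n-1)^2-2m(T,2))$ is \emph{decreasing} in $m(T,2)$ — so the bracket ceases to be pointwise monotone along $\preceq$, and in fact $\mathcal{E}_p$ is not monotone along $\preceq$ for large $p$ (already $N_6$ fails this). To close the gap one would need an argument that uses the specific combinatorics of $S_n$ and $P_n$ rather than the whole quasi-order: the natural candidate is a graph-transformation proof showing every tree can be reduced to $P_n$ (respectively $S_n$) by a sequence of elementary moves — sliding a pendant path, or a Kelmans-type operation — each changing $\mathcal{E}_p$ in the predicted direction, the key lemma being to compute the effect of one such move on $\phi(T,x)$ and feed it into the integral representations above. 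For even integers $p$ one could alternatively prove directly that, among trees on $n$ vertices, $P_n$ minimizes and $S_n$ maximizes the number of closed walks of each fixed even length, a statement of independent interest. The absence of a sign-definite representation or transformation valid for all $p>2$ is precisely why this remains a conjecture.
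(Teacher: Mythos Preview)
This statement is recorded in the paper as a \emph{conjecture}, not a theorem; the paper offers no proof of it. What the paper does establish, in the proposition immediately following, is only the star-side of each inequality --- $\mathcal{E}_p(S_n)\le\mathcal{E}_p(T)$ for $1\le p\le 2$ and $\mathcal{E}_p(T)\le\mathcal{E}_p(S_n)$ for $p>2$ --- via a short argument using only bipartiteness: the positive eigenvalues have $\ell^2$-norm $\sqrt{m}$, so monotonicity of $\ell^p$-norms gives the bound $2m^{p/2}$, which equals $\mathcal{E}_p(S_n)$ when $m=n-1$. The $P_n$ bounds are not touched at all.

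Your proposal goes substantially further than the paper. The Coulson-type representation $|x|^p=d_p\int_0^\infty\ln(1+x^2t^2)\,t^{-1-p}\,dt$ for $0<p<2$, combined with the matching-polynomial identity for trees and the classical fact $S_n\preceq T\preceq P_n$ in the Gutman quasi-order, correctly yields \emph{both} inequalities in part (i). Your subtracted version for $2<p<4$ is also sound: the single correction term $N_2(T)t^2=2(n-1)t^2$ is constant over trees of fixed order and $c_p^{(1)}<0$, so the monotonicity reverses as required; the endpoint $p=4$ follows from $\mathcal{E}_4=2\bigl((n-1)^2-2m(T,2)\bigr)$. Thus you actually \emph{prove} the full conjecture for $1\le p\le 4$, which is strictly more than the paper claims.

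Your diagnosis of the obstruction for $p>4$ is honest and essentially correct: once the subtracted polynomial involves $N_{2j}$ with $j\ge2$, those moments are mixed-sign in the $m(T,k)$ and the integrand is no longer pointwise monotone along $\preceq$, so the quasi-order argument by itself cannot close the gap. You are right to leave that range open --- the paper does too, and so does the literature it cites.
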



We are able to prove the inequalities for the star $S_n$, including all bipartite graphs.
\begin{prop}
 Let $G=(V,E)$ be a bipartite graph with $|V|=n$ and $|E|=m$ then 
 
 i) If $1\leq p\leq 2$, then $2(m)^{p/2}\leq \mathcal{E}_p(G)$.
 
 ii)  If $p>2$, then $2(m)^{p/2} \geq\mathcal{E}_p(G)$.
 
 In particular if for any tree $T$ $$\mathcal{E}_p(S_n) \geq\mathcal{E}_p(T)$$ if $p<2$ and $$\mathcal{E}_p(S_n) \geq\mathcal{E}_p(T).$$
\end{prop}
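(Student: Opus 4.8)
The plan is to exploit the symmetry of the adjacency spectrum of a bipartite graph and then reduce the whole statement to an elementary inequality for power sums. Recall that a bipartite graph has adjacency spectrum symmetric about the origin: the eigenvalues of $A(G)$ can be listed as $\mu_1,\dots,\mu_k,-\mu_1,\dots,-\mu_k$ together with a (possibly empty) block of zeros, where $\mu_1,\dots,\mu_k>0$. (This is immediate from the block form $\left(\begin{smallmatrix}0&B\\B^{*}&0\end{smallmatrix}\right)$ of $A(G)$: flipping the sign of the coordinates on one side of the bipartition negates each nonzero eigenvalue.) Consequently, for every $p>0$,
\begin{equation*}
\mathcal{E}_p(G)=\sum_{i=1}^n|\lambda_i|^p=2\sum_{j=1}^k\mu_j^p ,
\end{equation*}
and, since $\operatorname{Tr}(A(G)^2)=\sum_i\lambda_i^2=2m$, we get $\sum_{j=1}^k\mu_j^2=m$.

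Next I would record the elementary fact that for nonnegative reals $a_1,\dots,a_k$ and an exponent $\alpha$ one has $\bigl(\sum_j a_j\bigr)^{\alpha}\le\sum_j a_j^{\alpha}$ when $0<\alpha\le 1$ (subadditivity of $t\mapsto t^{\alpha}$, which follows from its concavity and vanishing at $0$) and the reverse inequality when $\alpha\ge 1$ (superadditivity, from convexity); the $k$-term versions follow from the two-term ones by induction. Applying this with $a_j=\mu_j^2$ and $\alpha=p/2$ gives, for $1\le p\le 2$,
\begin{equation*}
m^{p/2}=\Bigl(\sum_{j=1}^k\mu_j^2\Bigr)^{p/2}\le\sum_{j=1}^k\mu_j^p=\tfrac12\,\mathcal{E}_p(G),
\end{equation*}
which is (i); for $p>2$ the inequality reverses and yields $2m^{p/2}\ge\mathcal{E}_p(G)$, which is (ii). The zero eigenvalues contribute $0$ for every $p>0$ and so play no role, and the case $p=2$ is an equality, consistent with $\mathcal{E}_2(G)=2m$.

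For the consequence about trees I would note that the star $S_n$ has adjacency eigenvalues $\pm\sqrt{n-1}$ together with $n-2$ zeros, so $\mathcal{E}_p(S_n)=2(n-1)^{p/2}$. Every tree $T$ on $n$ vertices is bipartite with $m=n-1$, so parts (i) and (ii) applied to $T$ give $\mathcal{E}_p(T)\ge 2(n-1)^{p/2}=\mathcal{E}_p(S_n)$ for $1\le p\le 2$, and $\mathcal{E}_p(T)\le\mathcal{E}_p(S_n)$ for $p>2$; this is the star-extremal half of the conjecture. (The two displayed inequalities at the end of the statement as written appear to carry a typo in the direction; the correct conclusions are the ones just stated.)

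I do not expect a genuine obstacle here: the argument is a short assembly of the symmetry of the bipartite spectrum, the identity $\operatorname{Tr}(A^2)=2m$, and sub/superadditivity of power functions. The only points needing a word of care are the bookkeeping of the zero eigenvalues and the boundary value $p=2$, both of which are handled in one line.
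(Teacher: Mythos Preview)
Your argument is correct and essentially identical to the paper's: both use the spectral symmetry of bipartite graphs to reduce to the positive eigenvalues with $\sum_j\mu_j^2=m$, then invoke the same elementary power-sum inequality (the paper phrases it as monotonicity of $\ell^p$-norms, you as sub/superadditivity of $t\mapsto t^{p/2}$, but these are the same fact), and both specialize to trees via $m=n-1$ and $\mathcal{E}_p(S_n)=2(n-1)^{p/2}$. Your remark about the typo in the displayed tree inequalities is also apt.
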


\begin{proof}

Since $G$ is bipartite, the eigenvalues $\lambda_1\geq \lambda_2\cdots \geq\lambda_n$
satisfy that $\lambda_i=-\lambda_{n-i}$.

Now, $\mathcal{E}_2(T)=2m=\lambda_1^2+\cdots+\lambda_n^2$, then
$$\lambda_1^2+\cdots+\lambda_k^2=m=n-1$$
and 
$$\mathcal{E}(G)_p=
2(\lambda_1^p+\cdots+\lambda_k^p).$$

Now, by the monotonicity of $p$-norms we see that 
$$ \sqrt{m}=(\lambda_1^2+\cdots+\lambda_k^2)^{1/2}\leq (\lambda_1^p+\cdots+\lambda_k^p)^{1/p}$$
if  $1\leq p\leq2$, and 
$$ \sqrt{m}=(\lambda_1^2+\cdots+\lambda_k^2)^{1/2}\geq (\lambda_1^p+\cdots+\lambda_k^p)^{1/p}$$
if  $p\geq2.$

Finally, if $G$ is a tree then $m=n-1$. Recalling that the star graph has non zero eigenvalues $-\sqrt{n-1}$ and $\sqrt{n-1}$, we see that its $p$-energy is given by $2\sqrt{n-1}^p=2(m)^{p/2}$. 
\end{proof}

\begin{rem}
It is not true in general that $2(m)^{p/2}< \mathcal{E}_p(G)$, for $p>2$. For $n\geq3$, the complete graph on $n$-points is an easy counterexample. In this case the eigenvalues are $-1$ with multiplicity $n-1$ and $n-1$ with multiplicity 1. Thus the Schatten energy is given by $\mathcal{E}_p(K_n)=(n-1)^p+(n-1)$. On the other hand, $m=n(n-1)/2$. It can be easily checked, by elementary calculus that $(n-1)^p+(n-1)-(n(n-1)/2)^{p/2}>0$ for $p>4,n\geq3$, and thus $\mathcal{E}_p(K_n)>(2m)^{p/2}$. 
\end{rem}

\subsection{The $p$-energy of a vertex} 
Let us fix a graph $G$ for the sake of notation. One can also define the $p$-energy of a vertex as $\mathcal E_p(v_i):=|A(G)|^p_{ii}$. A direct calculation easily proves that $\mathcal{E}_2(v_i)=d_i$. One can prove that it is in the core for $p\geq1$ 

\begin{thm}\label{Th1}
Let $p\geq1$. The $p$-energy of a vertex is in the core.
\end{thm}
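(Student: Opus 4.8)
The plan is to run the argument of Theorems \ref{MTeorem} and \ref{Core} verbatim, with the function $\mathrm{Abs}$ replaced by $f(t)=|t|^{p}$. For every $p\geq 1$ this $f$ is continuous and convex on $\mathbb{R}$ and, crucially, satisfies $f(0)=0$; these are exactly the features the proof uses. Membership of the payoff vector $e=(\mathcal E_p(v_1),\dots,\mathcal E_p(v_n))$ in the core splits into efficiency, individual rationality and group rationality, and all three will drop out of a single block‑matrix computation.

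First I would prove the $p$-analogue of Theorem \ref{MTeorem}: if $H$ is an induced subgraph of $G$ with $V(H)=\{1,\dots,k\}$ (without loss of generality), then $\sum_{v\in V(H)}\mathcal E_p(v)\geq \mathcal E_p(H)$. Write $A=A(G)$, let $P$ be the orthogonal projection onto the first $k$ coordinates and $Q=I-P$, so that $P,Q$ are self-adjoint idempotents with $P+Q=I$. Applying Proposition \ref{JTI} with $i=2$, $X_1=A$, $X_2=0$, $A_1=P$, $A_2=Q$ gives $\sum_k A_k^{*}A_k=P+Q=I$, $\sum_k A_k^{*}X_kA_k=PAP$, and $\sum_k A_k^{*}f(X_k)A_k=Pf(A)P+f(0)Q=P|A|^{p}P$, so that
$$\operatorname{Tr}\bigl(|PAP|^{p}\bigr)\leq \operatorname{Tr}\bigl(P|A|^{p}P\bigr).$$
Since $PAP$ is the block matrix $\operatorname{diag}(A(H),0)$, the left-hand side equals $\operatorname{Tr}(|A(H)|^{p})=\mathcal E_p(H)$; since the diagonal of $|A|^{p}$ consists of the numbers $\mathcal E_p(v_i)$, the right-hand side equals $\sum_{i=1}^{k}(|A|^{p})_{ii}=\sum_{v\in V(H)}\mathcal E_p(v)$. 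This is the claimed inequality.

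From here the theorem is immediate. Efficiency holds because $\sum_{i=1}^{n}\mathcal E_p(v_i)=\sum_{i=1}^{n}(|A|^{p})_{ii}=\operatorname{Tr}(|A|^{p})=\mathcal E_p(G)=w_p(V)$. Individual rationality holds because $|A|^{p}$ is positive semidefinite, hence $\mathcal E_p(v_i)=(|A|^{p})_{ii}\geq 0=w_p(\{i\})$, so $e$ is an imputation. Group rationality is exactly the inequality above applied to $H=I(S)$: $\sum_{i\in S}\mathcal E_p(v_i)\geq \mathcal E_p(I(S))=w_p(S)$ for every $S\subset V$. Therefore $e\in C(w_p)$.

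I do not anticipate a genuine obstacle here; the only point demanding care is checking that $f(t)=|t|^{p}$ is admissible in Proposition \ref{JTI}. Continuity and convexity for $p\geq 1$ are clear, and $f(0)=0$ is precisely what makes the $Q$-block contribution $Qf(0)Q$ vanish — for $p<1$ convexity already fails, which is why the hypothesis $p\geq 1$ cannot be dropped. Everything else is the same bookkeeping already carried out in the case $p=1$.
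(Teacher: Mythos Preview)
Your proof is correct and follows exactly the approach the paper takes: the paper's own proof simply says to rerun the argument of Theorems \ref{MTeorem} and \ref{Core} with $f(\cdot)=|\cdot|^{p}$ in place of $|\cdot|$, which is precisely what you have spelled out in detail. Your explicit check that $f(0)=0$ makes the $Q$-block vanish is a helpful clarification the paper leaves implicit.
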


\begin{proof}
The proof is analogous as the proof of Theorem \ref{Core}, where $p=1$ is considered. The only modification to be done is to consider the convex function  $f=|\cdot|^p$, instead of $f=|\cdot|$.
\end{proof}

\begin{prop}The $p$-energy  of a vertex is an imputation that satisfies the axioms of null player, efficiency and symmetry.
\end{prop}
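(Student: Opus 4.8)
The plan is to follow the proof of Theorem~\ref{syefnull} (the case $p=1$), replacing the function $\mathrm{Abs}$ by $f(x)=|x|^{p}$ wherever a trace inequality is used and replacing the bound $\mathcal{E}(v_{i})\ge\deg(v_{i})/\Delta$ by an argument that uses only the positive semidefiniteness of $|A(G)|^{p}$. Efficiency is immediate: since $\mathcal{E}_{p}(v_{i})=(|A(G)|^{p})_{ii}$, summing over $i$ gives $\operatorname{tr}(|A(G)|^{p})=\mathcal{E}_{p}(G)=w_{p}(V)$. For the imputation claim, $w_{p}(\{i\})=\mathcal{E}_{p}(I(\{i\}))=0$ because a single vertex induces an edgeless graph, and $(|A(G)|^{p})_{ii}\ge0$ since $|A(G)|^{p}\succeq0$; with efficiency this shows $e_{p}=(\mathcal{E}_{p}(v_{1}),\dots,\mathcal{E}_{p}(v_{n}))$ is an imputation.

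For the null player axiom, let $i$ be a null player. Taking $S=V\setminus\{i\}$ in the defining condition gives $w_{p}(V)=w_{p}(V\setminus\{i\})=\mathcal{E}_{p}\bigl(I(V\setminus\{i\})\bigr)$. By Theorem~\ref{Th1} the vector $e_{p}$ lies in the core, so $\mathcal{E}_{p}\bigl(I(V\setminus\{i\})\bigr)\le\sum_{j\ne i}\mathcal{E}_{p}(v_{j})$, and together with efficiency $w_{p}(V)=\sum_{j}\mathcal{E}_{p}(v_{j})$ this forces $\mathcal{E}_{p}(v_{i})\le0$, hence $\mathcal{E}_{p}(v_{i})=0$. (As for $p=1$ one can moreover check that the null players are exactly the isolated vertices: a non-isolated $v_{i}$ has $A(G)e_{i}\ne0$, so $e_{i}\notin\ker A(G)=\ker|A(G)|^{p}$, and since $|A(G)|^{p}\succeq0$ a vanishing diagonal entry would force $|A(G)|^{p}e_{i}=0$, a contradiction; thus $\mathcal{E}_{p}(v_{i})>0$.)

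For symmetry, I would show that if $v_{i}$ and $v_{j}$ are symmetric players then the transposition $\tau=(v_{i}\,v_{j})$ is an automorphism of $G$. The only incidences $\tau$ can disturb involve $v_{i}$ or $v_{j}$; for $b\notin\{v_{i},v_{j}\}$ apply the symmetry condition with $S=\{b\}$. The graph $I(\{b,v_{i}\})$ is either an edge, whose nonzero eigenvalues are $\pm1$ and hence has $\mathcal{E}_{p}=2$ for every $p$, or edgeless with $\mathcal{E}_{p}=0$; the same dichotomy holds for $I(\{b,v_{j}\})$. Since $w_{p}(\{b,v_{i}\})=w_{p}(\{b,v_{j}\})$ and $2\ne0$, we get $b\sim v_{i}\iff b\sim v_{j}$, while the incidence $v_{i}\sim v_{j}$, if present, is trivially preserved. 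Hence the permutation matrix $\Pi$ of $\tau$ satisfies $\Pi^{-1}A(G)\Pi=A(G)$, so $\Pi^{-1}|A(G)|^{p}\Pi=|A(G)|^{p}$ by the functional calculus, and comparing $(i,i)$ entries gives $\mathcal{E}_{p}(v_{i})=(|A(G)|^{p})_{ii}=(|A(G)|^{p})_{jj}=\mathcal{E}_{p}(v_{j})$.

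The only delicate point is this last translation of the numerical symmetry condition into a structural one. I would argue through automorphisms rather than through the condition $N(v_{i})=N(v_{j})$, since the latter can fail for adjacent ``twins'' (the two vertices of $K_{2}$ are symmetric players with distinct neighbourhoods), whereas the automorphism route is uniform and makes $\mathcal{E}_{p}(v_{i})=\mathcal{E}_{p}(v_{j})$ transparent. Apart from this, the verification is a routine adaptation of the arguments already given for $p=1$, with the core property from Theorem~\ref{Th1} doing the real work.
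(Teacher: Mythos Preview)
Your proof is correct and follows the route the paper itself indicates (adapt Theorem~\ref{syefnull} to general $p$), so in spirit the approaches coincide. Two of your adaptations are worth flagging as genuine improvements over a literal transcription of the $p=1$ argument. First, for the null player axiom you argue directly from the core property (Theorem~\ref{Th1}): null $\Rightarrow w_{p}(V)=w_{p}(V\setminus\{i\})\le\sum_{j\ne i}\mathcal{E}_{p}(v_{j})$, forcing $\mathcal{E}_{p}(v_{i})\le0$. This avoids having to find a $p$-analogue of the bound $\mathcal{E}(v_{i})\ge\deg(v_{i})/\Delta$ used in Theorem~\ref{syefnull}, and your kernel argument for the characterisation ``null $=$ isolated'' is a clean replacement. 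Second, your observation about symmetry is sharp: the paper's characterisation ``$v_{i},v_{j}$ symmetric $\iff N(v_{i})=N(v_{j})$'' in Theorem~\ref{syefnull} does fail for adjacent twins such as the two vertices of $K_{2}$, because the defining condition only lets one test $S\subset V\setminus\{v_{i},v_{j}\}$. Your formulation via the transposition automorphism (equivalently, $N(v_{i})\setminus\{v_{j}\}=N(v_{j})\setminus\{v_{i}\}$) is the correct structural translation and makes the conclusion $\mathcal{E}_{p}(v_{i})=\mathcal{E}_{p}(v_{j})$ immediate through the functional calculus; this step is in fact not spelled out in the paper's proof of Theorem~\ref{syefnull} either.
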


\begin{proof}

It is clear from the definition that it is an imputation. The other properties are proved along the same lines as Theorem \ref{syefnull}, with obvious modifications. We leave the proof to the reader. 
\end{proof}
We do not go into much details on further of the graph theoretical properties of the $p$-energy of a vertex but let us state a basic bound.

\begin{thm}
Let $0<r<s$. Then $$\mathcal E_r(v_i)\leq\mathcal{E}_s(v_i)^{r/s}.$$
In particular, for $p<2$,
$$\mathcal E_p(v_i)\leq d_i^{p/2},$$
where $d_i$ denotes the degree of $v_i.$
\end{thm}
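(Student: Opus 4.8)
The statement is a vertex-level analogue of the monotonicity of Schatten norms (Proposition \ref{mon schatten}), so the plan is to reduce it to an inequality about the vector state $\phi_i(M)=M_{ii}$ applied to the positive matrix $|A(G)|$. Write $B=|A(G)|$, a positive semidefinite matrix, so that $\mathcal E_r(v_i)=\phi_i(B^r)$ and $\mathcal E_s(v_i)=\phi_i(B^s)$. The claim $\phi_i(B^r)\le \phi_i(B^s)^{r/s}$ is exactly Jensen's inequality for the concave function $t\mapsto t^{r/s}$ (note $0<r/s<1$) applied to the state $\phi_i$ on the positive operator $B^s$: since $\phi_i$ is a state (positive, unital — as recalled after Definition \ref{Energyv}), we have $\phi_i\bigl((B^s)^{r/s}\bigr)\le \phi_i(B^s)^{r/s}$. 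Concretely, diagonalize $B=U D U^{-1}$ with $D=\mathrm{diag}(\mu_1,\dots,\mu_n)$, $\mu_j\ge 0$, and set $c_j=|U_{ij}|^2$, so that $\sum_j c_j=1$ because $U$ is unitary. Then $\phi_i(B^r)=\sum_j c_j\mu_j^r$ and $\phi_i(B^s)=\sum_j c_j\mu_j^s$, and the desired inequality is $\sum_j c_j\mu_j^r \le \bigl(\sum_j c_j\mu_j^s\bigr)^{r/s}$, which is precisely Jensen's inequality for the concave map $x\mapsto x^{r/s}$ applied to the convex combination $\sum_j c_j(\mu_j^s)$.

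For the ``in particular'' clause, I would specialize $s=2$ and $r=p<2$. By the earlier computation $\mathcal E_2(v_i)=|A(G)|^2_{ii}=(A(G)^2)_{ii}=d_i$, since the $(i,i)$ entry of $A(G)^2$ counts closed walks of length two at $v_i$, i.e.\ the degree. Substituting into the general bound gives $\mathcal E_p(v_i)\le \mathcal E_2(v_i)^{p/2}=d_i^{p/2}$, as claimed. One should note $p\ge 1$ is not needed here; any $0<p<2$ works, matching the hypothesis $0<r<s$.

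The only mild subtlety — and the one place to be a little careful — is that the ``function of a self-adjoint matrix'' construction and the Jensen-type inequality are being applied to $B=|A(G)|$ rather than to $A(G)$ itself; but $B$ is self-adjoint (indeed positive), so $B^r$ and $B^s$ are well defined by the functional calculus described before Proposition \ref{JTI}, and the state $\phi_i$ is genuinely positive and unital, so nothing obstructs the argument. Alternatively, one can invoke Jensen's trace inequality (Proposition \ref{JTI}) directly in the one-term form with $A_1=P_i$ the rank-one projection onto $e_i$ padded suitably — but since $P_i$ alone does not satisfy $\sum A_k^*A_k=I$, the cleanest route is the direct convex-combination argument above rather than forcing it through Proposition \ref{JTI}. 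I expect no real obstacle; the proof is a two-line reduction to scalar Jensen once the state interpretation is in place.
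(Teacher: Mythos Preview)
Your proof is correct. The paper takes a different but closely related route: it invokes the noncommutative H\"older inequality \eqref{Hold} with $X=|A|^r$, $Y=I$, and exponents $p=s/r$, $q=s/(s-r)$, which immediately yields $\phi_i(|A|^r)\le \phi_i(|A|^s)^{r/s}\,\phi_i(I)^{(s-r)/s}=\mathcal E_s(v_i)^{r/s}$. You instead diagonalize $|A|$ and reduce to scalar Jensen for the concave map $t\mapsto t^{r/s}$ against the probability weights $c_j=|U_{ij}|^2$. These are the same inequality seen from two sides: H\"older with one factor equal to the identity \emph{is} the $L^r$--$L^s$ monotonicity for a state, which in the diagonalized picture is exactly your Jensen step. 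The paper's version is a one-liner because \eqref{Hold} is already available; yours is more self-contained and makes transparent why no lower bound on $r$ (beyond $r>0$) is required.
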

\begin{proof}
We use Non Commutative Holder Inequality (\ref{Hold}) for $X=|A|^r$ and  $Y=1$, with $p=s/r$ and $q=s/(s-r)$.  For $p=2$, we notice that $\mathcal E_2(v_i)=d_i.$
\end{proof}

Finally, we show that as in the case of  vertex energy, the $p$-energy of a bipartite graph is divided evenly between its two parts. 
\begin{prop}\label{bipartite}
Let $G$ be a bipartite graph with parts $V$ and $W$ then
\begin{equation}
\sum_{v \in V} \mathcal{E}_{p} (v) = \sum_{w \in W} \mathcal{E}_{p} (w).
\end{equation}
\end{prop}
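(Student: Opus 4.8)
The plan is to exploit the block structure of the adjacency matrix of a bipartite graph. If $G$ has parts $V$ and $W$, then after ordering the vertices so that those in $V$ come first, the adjacency matrix takes the form
\[
A = \begin{pmatrix} 0 & B \\ B^* & 0 \end{pmatrix},
\]
where $B$ is the $|V|\times|W|$ biadjacency matrix. The quantity $\sum_{v\in V}\mathcal{E}_p(v)$ is the sum of the first $|V|$ diagonal entries of $|A|^p$, and $\sum_{w\in W}\mathcal{E}_p(w)$ is the sum of the last $|W|$ diagonal entries; so the claim is equivalent to $\operatorname{Tr}(P|A|^pP) = \operatorname{Tr}(Q|A|^pQ)$, where $P$ is the projection onto the $V$-coordinates and $Q=I-P$.

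The key step is to observe that $|A|^p$ respects the same block decomposition in a controlled way. Since $A^2 = \begin{pmatrix} BB^* & 0 \\ 0 & B^*B \end{pmatrix}$ is block diagonal, and $|A|^p = (A^2)^{p/2}$ (using that $A$ is self-adjoint, so $|A| = (A^2)^{1/2}$), the continuous functional calculus applied to a block-diagonal self-adjoint matrix gives
\[
|A|^p = (A^2)^{p/2} = \begin{pmatrix} (BB^*)^{p/2} & 0 \\ 0 & (B^*B)^{p/2} \end{pmatrix}.
\]
Therefore $\sum_{v\in V}\mathcal{E}_p(v) = \operatorname{Tr}\bigl((BB^*)^{p/2}\bigr)$ and $\sum_{w\in W}\mathcal{E}_p(w) = \operatorname{Tr}\bigl((B^*B)^{p/2}\bigr)$. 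The proof is then finished by the standard fact that $BB^*$ and $B^*B$ have the same nonzero eigenvalues with the same multiplicities (equivalently, $B$ and $B^*$ have the same singular values), so $g(BB^*)$ and $g(B^*B)$ have equal traces for any continuous $g$ with $g(0)=0$; applying this to $g(t)=t^{p/2}$ gives the result.

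The main obstacle — really the only point requiring a little care — is justifying that $|A|^p$ is block-diagonal with the stated blocks, i.e. that functional calculus on $A^2$ commutes with the direct-sum decomposition $\mathbb{C}^n = \mathbb{C}^{|V|}\oplus\mathbb{C}^{|W|}$. This is immediate from the definition of $f(M)$ for self-adjoint $M$ given in the preliminaries: diagonalizing $BB^*$ and $B^*B$ separately and assembling the unitaries blockwise diagonalizes $A^2$, and the functional calculus acts entrywise on the resulting diagonal. One should also note $p\geq 1 > 0$ so $t\mapsto t^{p/2}$ is continuous on $[0,\infty)$ and vanishes at $0$, which is all that is needed for the trace identity $\operatorname{Tr}\bigl((BB^*)^{p/2}\bigr) = \operatorname{Tr}\bigl((B^*B)^{p/2}\bigr)$; no positivity or convexity beyond that enters. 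Alternatively, one can phrase the whole argument spectrally: the eigenvalues of a bipartite graph come in pairs $\pm\sigma$ where $\sigma$ ranges over the singular values of $B$, and each diagonal sum over a part can be read off from the corresponding Gram matrix, but the block-matrix route above is the cleanest to write.
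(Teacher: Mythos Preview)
Your proof is correct and follows essentially the same route as the paper: write $A$ in block form, observe that $A^2$ is block diagonal with blocks $BB^*$ and $B^*B$, apply the functional calculus blockwise to get $|A|^p$, and conclude via the equality of nonzero eigenvalues of $BB^*$ and $B^*B$. If anything, your version is slightly more careful in noting that $t\mapsto t^{p/2}$ vanishes at $0$, which is needed since $BB^*$ and $B^*B$ may have different dimensions and hence different multiplicities of the zero eigenvalue.
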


\begin{proof}
Being bipartite corresponds to having a matrix of the form,
\begin{equation}
A =
\begin{pmatrix}
{\bf 0}_{r,r} & B \\
 B^T    & {\bf 0}_{s,s}
\end{pmatrix},
\end{equation}
then the  matrix $M=AA^T$ is of the form
\begin{equation}
M =
\begin{pmatrix}
BB^T & 0\\
 0   & B^T B
\end{pmatrix},
\end{equation}
and the p-th positive power of the absolute value of the matrix is given by
\begin{equation}
|A|^P =
\begin{pmatrix}
(\sqrt{BB^T})^p & 0\\
 0   &(\sqrt{B^T B}P
\end{pmatrix}
\end{equation}
Now, we have the equalities
$$Tr(B^TB)^{p/2}=Tr\sqrt{B^TB}^p= \sum_{i\in|V_2|} |A|^p_{ii} =\sum_{i\in|V_2|} E(v_i),$$
and
$$ Tr(B^TB)^{p/2}=Tr\sqrt{B^TB}^p=\sum_{i\in|V_1|} |A|^p_{ii} =\sum_{i\in|V_1|} E(v_i),$$
from where, since $BB^T$ and $B^TB$ have the same eigenvalues, then $Tr(BB^T)^p/2=Tr(B^TB)^p/2$, and we get the result.
\end{proof}

\subsection{The case $p=2$}

For $p=2$ the $p-$energy is the same as the two times the number of edges of the graph. In this case the $p-$energy game is a particular case of what is called \emph{ induced subgraph games} with all weights identically 1, see \cite{ref1,ref2} for details.

It is well known that induced subgraph games with non-negative weights the game is convex, as a corollary the core is nonempty.

Moreover, in this case the $p-$energy of the vertex coincides with the degree which by theorem \ref{Th1} is in the core. Finally we have the next proposition.

\begin{prop}
The Shapley value of the $2$-energy game on graphs is exactly the degree of the vertex, which also coincides with the $2$-energy of a vertex.
\end{prop}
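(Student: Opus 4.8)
~

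The plan is to verify the Shapley value formula directly, using the additivity and symmetry of the Shapley value together with the special combinatorial structure of the $2$-energy game, namely that $w_2(S) = 2\,|E(I(S))|$ counts (twice) the edges with both endpoints in $S$. First I would write $w_2$ as a sum over edges of elementary ``unanimity-type'' games: for each edge $\{a,b\}\in E$, let $u_{\{a,b\}}(S) = 2$ if $\{a,b\}\subseteq S$ and $0$ otherwise. Then $w_2 = \sum_{\{a,b\}\in E} u_{\{a,b\}}$, since a coalition $S$ collects a contribution of $2$ precisely from each edge it fully contains, matching $2\,|E(I(S))|$. By the additivity axiom, $\varphi(w_2) = \sum_{\{a,b\}\in E}\varphi(u_{\{a,b\}})$, so it suffices to compute the Shapley value of each $u_{\{a,b\}}$.

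Next I would compute $\varphi(u_{\{a,b\}})$. The game $u_{\{a,b\}}$ is $2$ times the unanimity game on the two-element carrier $\{a,b\}$: only the players $a$ and $b$ are non-null, and they are symmetric to each other. By the null player axiom, $\varphi_i(u_{\{a,b\}}) = 0$ for $i\notin\{a,b\}$; by efficiency and symmetry, $\varphi_a(u_{\{a,b\}}) = \varphi_b(u_{\{a,b\}}) = 1$. (This can also be read off directly from the marginal-contribution formula: $a$ is pivotal for $u_{\{a,b\}}$ exactly in the orderings where $a$ comes after $b$, which is half of all orderings, giving $\tfrac12\cdot 2 = 1$.) Summing over all edges incident to a fixed vertex $v_i$, the total Shapley payoff of $v_i$ is $\sum_{\{v_i,v_j\}\in E} 1 = \deg(v_i) = d_i$.

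Finally I would conclude by combining this with facts already established in the paper: $\mathcal{E}_2(v_i) = d_i$ was shown by direct calculation (since $|A|^2 = A^2$ and $(A^2)_{ii}$ counts closed walks of length $2$, i.e.\ the degree), so $\varphi(w_2) = (d_1,\dots,d_n) = (\mathcal{E}_2(v_1),\dots,\mathcal{E}_2(v_n))$, as claimed. I do not anticipate a real obstacle here: the only thing to be careful about is the bookkeeping of the factor $2$ (the $2$-energy is $2m$, not $m$, and each edge contributes $2$ rather than $1$ to the unanimity decomposition), and making sure the decomposition $w_2 = \sum_{\{a,b\}\in E} u_{\{a,b\}}$ is stated cleanly so that the additivity axiom applies verbatim.
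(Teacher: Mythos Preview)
Your argument is correct, but it proceeds along a different route than the paper. The paper works directly with the marginal-contribution formula for the Shapley value: it observes that $w_2(S\cup\{i\})-w_2(S)=2\,|S\cap N(i)|$, substitutes this into the Shapley sum, interchanges the order of summation over coalitions and over neighbours $j\in N(i)$, counts the coalitions of each size containing $j$ but not $i$, and simplifies the resulting binomial sum to $\deg(i)$. You instead exploit the linearity of the Shapley value: you write $w_2$ as $\sum_{\{a,b\}\in E} u_{\{a,b\}}$ with each summand a scaled two-player unanimity game, and then read off $\varphi(u_{\{a,b\}})$ from the null-player, symmetry and efficiency axioms. Your decomposition is cleaner and avoids any combinatorial bookkeeping with binomial coefficients; the paper's computation, while longer, is entirely self-contained and does not appeal to the unanimity-game basis or to the axiomatic characterization of the Shapley value. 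Both reach the same conclusion and both are complete; your care with the factor of $2$ is exactly the right thing to flag.
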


\begin{proof}
Let $\mathcal V(i)$ the set of neighbours of the vertex $i$.  Recall that the Shapley value is given by
$$\phi_i(v)=\sum_{S\subset N\backslash\{i\}}\frac{|S|!(n-|S|-1)!(w(S\cup\{i\})-w(S))}{n!}$$

Since $w(S\cup\{i\})$ is twice  the number edges of the induced graph of $S\cup\{i\}$ and $w(S)$ is twice the number of edges of the induced graph of $S$, then the marginal contribution of i, $w(S\cup\{i\})-w(S)$, equals two times the number of neighbours of $i$ inside $S$ i.e. $2|S\cap \mathcal V(i)|$, which yields,
\begin{eqnarray*}
\phi_i(v)&=&\sum_{S\subset N\backslash\{i\}}\frac{|S|!(n-|S|-1)!2|S\cap \mathcal V(i)|}{n!}\\
&=&2\sum_{S\subset N\backslash\{i\}}\frac{|S|!(n-|S|-1)!|S\cap \mathcal V(i)|}{n!}\\
&=&2\sum_{S\subset N\backslash\{i\}}\left(\sum_{j\in S\cap \mathcal V(i)}\frac{|S|!(n-|S|-1)!}{n!}\right)
\end{eqnarray*}

Now since the only coalitions that contribute to the sum are the ones containing a neighbour we can actually rewrite this sum as

\begin{eqnarray*}
\phi_i(v)&=&2\sum_{j\in \mathcal{V}}\left(\sum_{S\subset N\backslash\{i\},j\in S}\frac{|S|!(n-|S|-1)!}{n!}\right)
\end{eqnarray*}

Finally, the number of coalitions of size $k$ containing the vertex $j$ and not containing the vertex $i$ is given by $\binom{n-2}{k-1}$ so that

\begin{eqnarray*}
\phi_i(v)&=&2\sum_{j\in \mathcal{V}}\left(\sum_{k=1}^{n-1}\binom{n-2}{k-1}\frac{k!(n-k-1)!}{n!}\right)\\
&=&2\sum_{j\in \mathcal{V}}\left(\sum_{k=1}^{n-1}\binom{n-2}{k-1}\frac{k!(n-k-1)!}{n!}\right)\\
&=&2\sum_{j\in \mathcal{V}}\left(\sum_{k=1}^{n-1}\frac{k}{n(n-1)}\right)\\&=&2\sum_{j\in \mathcal{V}}\frac{1}{2}=deg(i).
\end{eqnarray*}

\end{proof}

\section{Examples}

Let us finish with two simple examples of well known graphs.

\begin{exa}[Path]

Consider a path of size $n$. That is the graph with vertex set $V=\{1,2,\dots, n\}$ and edge set $E=\{i\sim i+1|i=1\dots,n\}$.

\begin{center}
\begin{figure}[ht]
\includegraphics[width=10cm]{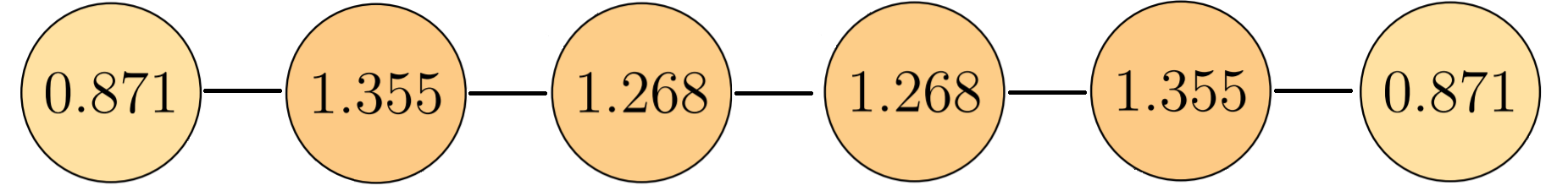}
\caption{Vertex energies on the path of size 6. The intensity of the color is proportional to this quantity.}
\end{figure}
\end{center}

According to Corollary 4.10 in \cite{ALR} the vertex energies satisfy the following relation. 
$$\mathcal{E}(v_1)<\mathcal{E}(v_3)< \cdots <\mathcal{E}(v_{2k+1})< \mathcal{E}(v_{2k+2})< \mathcal{E}(v_{2k})< \cdots <\mathcal{E}(v_{4}) < \mathcal{E}(v_2)$$
for all $k<[n/4].$

So the minimum is attained at the vertices $v_1$ and $v_n$, and the maximum at $v_2$ and $v_{n-1}$.

A possible game-theoretical interpretation for this behavior is the following. Since the extreme points ($v_1$ and $v_n$) can only cooperate with their neighbour, they have not much negotiation margin and thus accept a payment which is much lower than the rest. So even though the total gain of players $v_1$ and $v_2$ is larger bigger than 2, player $v_1$ accepts a payoff which is smaller than 1.
\end{exa}

\begin{exa}

The star with $n$ vertices is an example of a bipartite graph with parts $\{v_1\}$ and $\{v_2,\dots,v_{n-1}\}$.
\begin{figure}[h]
\includegraphics[width=12cm]{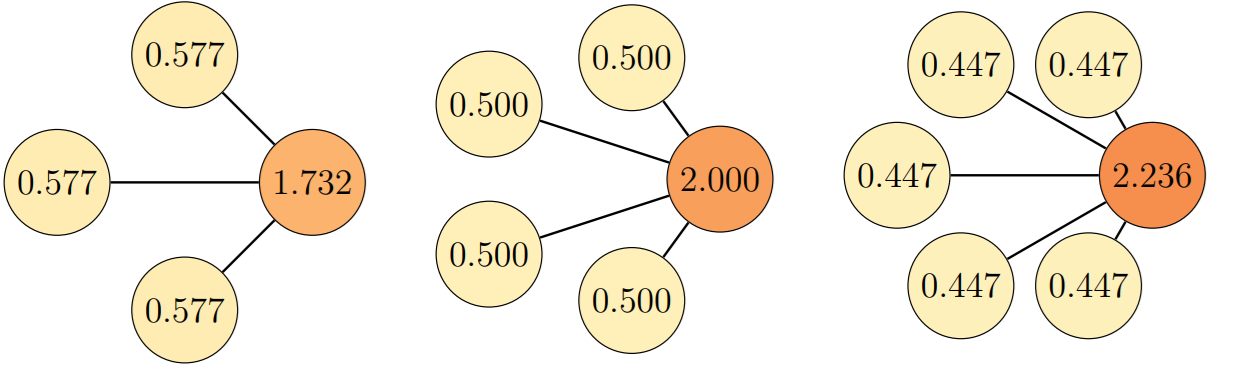}
\caption{Vertex energies on star of size 4 to 6. The intensity of the color is proportional to this quantity.}
\end{figure}

The total energy is $2\sqrt{n-1}$ and is divided on equally between the two parts. So $\mathcal{E}_G(v_1)=\sqrt{n-1}$ and $\mathcal{E}_G(v_i)=1/\sqrt{n-1}$. Moreover, one can think here that each of the edges represents a collaboration of work and in this case it is split evenly between the two vertices.

\end{exa}

\begin{exa} 

Consider the graph $S_3=P_3$, with vertex set $v_1,v_2,v_3$ and with edges $v_1\sim v_2$ and $v_2\sim v_3$ . The core is the set of payoffs $\phi:V\to\mathbb{R}$ that satisfies the equations 
\begin{eqnarray*}
\phi(v_1)&\geq& 0, \quad \phi(v_2)\geq 0, \quad \phi(v_3)\geq 0, \\
\phi(v_1)+\phi(v_2)&\geq& 2, \quad \phi(v_2)+\phi(v_3)\geq 2, \quad \phi(v_2)+\phi(v_3)\geq 0, \\
\phi(v_1)+\phi(v_2)+\phi(v_3)&=& 2\sqrt{2}\\
\end{eqnarray*}

The vertex energy is given by $\mathcal{E}(v_1)=1/\sqrt{2}$, $\mathcal{E}(v_2)=\sqrt{2}$, $\mathcal{E}(v_3)=1/\sqrt{2}$, while the Shapley value is given by $Sh(v_1)=Sh(v_3)=.6093$ ad $Sh(v_2)=1.6093$ which is inside the core. Figure \ref{coreS3} illustrates the different values.

\begin{center}

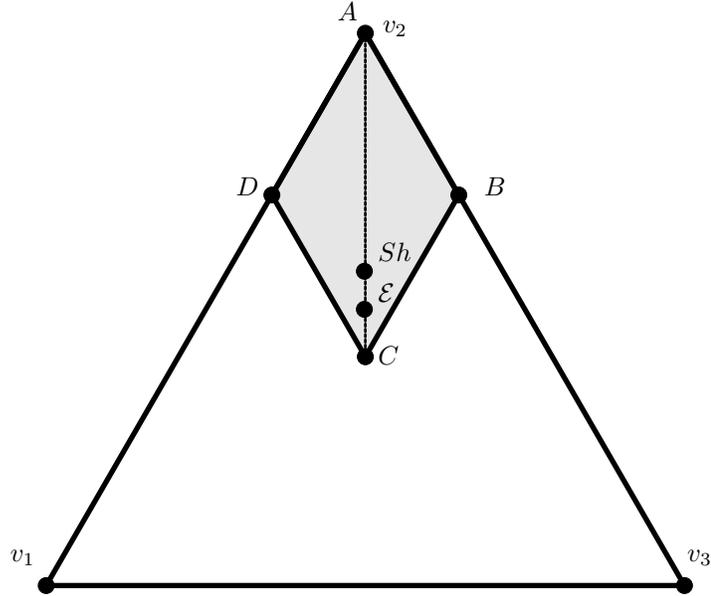
\begin{figure}[h]

\definecolor{uuuuuu}{rgb}{0,0,0}

\begin{tikzpicture}[line cap=round,line join=round,>=triangle 45,x=1cm,y=1cm,scale=3]

\fill[line width=2pt,fill=black,fill opacity=0.1] (1.4142135623730954,2.4494897427831783) -- (1,1.7320508075688772) -- (1.4142135623730954,1.014611872354576) -- (1.8284271247461905,1.7320508075688774) -- cycle;
\draw [line width=2pt] (0,0)-- (1.4142135623730954,2.4494897427831783);
\draw [line width=2pt] (1.4142135623730954,2.4494897427831783)-- (2.8284271247461907,0);
\draw [line width=2pt] (2.8284271247461907,0)-- (0,0);
\draw [line width=1pt] (1.8284271247461905,1.7320508075688774)-- (1.4142135623730954,1.014611872354576);
\draw [line width=1pt] (1,1.7320508075688772)-- (1.4142135623730954,1.014611872354576);
\draw [line width=1pt,dash pattern=on 1pt off 1pt] (1.4142135623730954,2.4494897427831783)-- (1.4142135623730954,1.014611872354576);
\draw (2.8,0.2) node[anchor=north west] {$v_3$};
\draw (1.45,2.5448385846905963) node[anchor=north west] {$v_2$};
\draw (1.43,1.56) node[anchor=north west] {$Sh$};

\draw (1.43,1.38) node[anchor=north west] {$\mathcal{E}$};
\draw (1.43,1.10) node[anchor=north west] {$C$};
\draw (1.90,1.85) node[anchor=north west] {$B$};
\draw (.80,1.85) node[anchor=north west] {$D$};

\draw (1.25,2.63) node[anchor=north west] {$A$};
\draw (-0.2,0.2) node[anchor=north west] {$v_1$};
\draw [line width=2pt] (1.4142135623730954,2.4494897427831783)-- (1,1.7320508075688772);
\draw [line width=2pt] (1,1.7320508075688772)-- (1.4142135623730954,1.014611872354576);
\draw [line width=2pt] (1.4142135623730954,1.014611872354576)-- (1.8284271247461905,1.7320508075688774);
\draw [line width=2pt] (1.8284271247461905,1.7320508075688774)-- (1.4142135623730954,2.4494897427831783);
\begin{scriptsize}
\draw [fill=uuuuuu] (0,0) circle (1pt);
\draw [fill=uuuuuu] (1,1.7320508075688772) circle (1pt);
\draw [fill=uuuuuu] (1.4142135623730954,2.4494897427831783) circle (1pt);
\draw [fill=uuuuuu] (1.4142135623730954,1.014611872354576) circle (1pt);
\draw [fill=uuuuuu] (1.8284271247461905,1.7320508075688774) circle (1pt);
\draw [fill=uuuuuu] (2.8284271247461907,0) circle (1pt);
\draw [fill=uuuuuu] (1.41,1.3939356435643566) circle (1pt);
\draw [fill=uuuuuu] (1.41,1.225) circle (1pt);
\end{scriptsize}
\end{tikzpicture}

    \caption{The core in gray, delimited by the cuadrilateral $ABCD$ with  $A=(0,2.828,0)$, $B=(0.828,2,0)$, $C=(0.828,1.172,0.828)$ $D=(0,2,0.828)$. The Shapley value $Sh=(0.6093,1.6093,0.6093)$  and the energy of a vertex $\mathcal{E}= (0.707,1.414,0.707)$ }
    \label{coreS3}
\end{figure}

\end{center}
\end{exa}

\section{Conclusions and further questions} 

In this paper we have shown how to define game by  borrowing the concept of energy from chemical graph theory and generalized for Schatten norms. The main contribution of this paper is giving a successful interpretation of the vertex energy of a graph as a payoff and thus giving an explanation on how the energy is split among the vertices, which otherwise may seems counter-intuitive. As a consequence of proving desirable properties from the viewpoint of game theory, we obtained new inequalities which may be of interest from the graph theoretical viewpoint.  This opens a new line of study, which we expect to develop in the future.

Regarding the Schatten energy, the case $p=2$ is special since in that case the the $p$-energy and  the Shapley value coincide, and thus the solution may be extended to any game. In general, the $p$-energy makes sense for all matrices but it is not clear how to define it for any game, it will useful to give another set of axioms which characterize the $p$-energy and which is suitable for any game. Moreover, as we mentioned above, the Shapley value and the $p$-energy of a vertex share many properties as solutions of the energy game. Giving inequalities between them which may allow one to compare them may be of interest.

Finally, as the reader may have noticed, the main technical tool used in this paper is Jensen Trace Inequality. This inequality is, more generally, valid for any convex function, giving sense for other games on graphs, which satisfy superadditivity and have at least one solution on the core analogue to the $p$-energy. One natural question is which set of games come as an outcome of this construction and how large is this set as a subset of the games on $n$ players.

\subsection*{Acknowledgement} The first author would like to thank Carlo Danon for the work done during the summer of 2019 which was crucial for starting this paper. We would like to thank Jorge Fern\'andez for some comments and for letting us use the Figures 1 and 2 in this paper. The second author received support from CONACYT grant CB-2017-2018-A1-S-9764.

{\small
\renewcommand{\baselinestretch}{0.5}
\newcommand{\bi}{\vspace{-.05in}\bibitem}

\end{document}